\newtheorem{thm}{Theorem}[section]
\newtheorem{lem}{Lemma}[section]
\theoremstyle{definition}
\theoremstyle{remark}
\newtheorem{rem}{Remark}[section]
\numberwithin{equation}{section}
\title{\bf Two Single-shot Methods for Locating Multiple Electromagnetic Scatterers}
\author{Jingzhi Li\thanks{Faculty of Science, South University of Science and
Technology of China, Shenzhen 518055, P.~R.~China. Email: {\tt li.jz@sustc.edu.cn}},\quad Hongyu Liu\thanks{Department of Mathematics and Statistics, University of North Carolina, Charlotte, NC 28223, USA.   Email:  {\tt hongyu.liuip@gmail.com}},\quad Zaijiu Shang\thanks {Institute of Mathematics, Academy of Mathematics and
Systems Science, Chinese Academy of Sciences, Beijing 100190, P. R. China. Email: {\tt zaijiu@amss.ac.cn} } ,\quad Hongpeng Sun\thanks{Institute for Mathematics and Scientific Computing, University of Graz, Heinrichstr. 36, A-8010 Graz, Austria. Email: {\tt hongpeng.sun@uni-graz.at} } }
\date{} % Activate to display a given date or no date (if empty),
\begin{document}
\maketitle

\begin{abstract}

We develop two inverse scattering schemes for locating multiple
electromagnetic (EM) scatterers by the electric far-field
measurement corresponding to a single incident/detecting plane wave.
The first scheme is for locating scatterers of small size compared
to the wavelength of the detecting plane wave. The multiple
scatterers could be extremely general with an unknown number of
components, and each scatterer component could be either an
impenetrable perfectly conducting obstacle or a penetrable
inhomogeneous medium with an unknown content. The second scheme is
for locating multiple perfectly conducting obstacles of regular size
compared to the detecting EM wavelength. The number of the obstacle
components is not required to be known in advance, but the shape of
each component must be from a certain known admissible class. The
admissible class may consist of multiple different reference
obstacles. The second scheme could also be extended to include the
medium components if a certain generic condition is satisfied. Both
schemes are based on some novel indicator functions whose indicating
behaviors could be used to locate the scatterers. No inversion will
be involved in calculating the indicator functions, and the proposed
methods are every efficient and robust to noise. Rigorous
mathematical justifications are provided and extensive numerical
experiments are conducted to illustrate the effectiveness of the
imaging schemes.

\end{abstract}

\section{Introduction}\label{sec:intro}

This paper is concerned with the inverse scattering problem of reconstructing an inhomogeneous scatterer located in an otherwise homogeneous space by measuring the corresponding electromagnetic (EM) wave fields far away from the scatterer produced by sending some detecting EM wave fields. The inverse electromagnetic scattering problem has been playing a key role in many areas of science and technology, such as radar and sonar, non-destructive testing, remote sensing, geophysical exploration and medical imaging to name just a few; see \cite{AK1,AK2,CaC05,CCM11, CK,Isa,LZr,Pot01,Uhl} and the references therein. In the current article, we shall mainly consider the reconstruction scheme for this inverse scattering problem. There are extensive studies in the literature in this aspect and many imaging schemes have been developed by various authors, and we would like to refer to \cite{ABP06,Ammari2,AILP,AK1,AK2,AKKL,APRT,CZ,CoK96,Ike99,IJZ0,IJZ,KG,Pot06,SZC} and the references therein. However, we would like to remark that in those schemes, one either needs to make use of many wave measurements or if only a few wave measurements are utilized, then one must require that the underlying scatterer is of small size compared to the detecting wavelength.  In this work, we shall consider our study in a very practical setting by making use of a single electric far-field measurement. That is, we shall consider the reconstruction by measuring the far-field electric wave corresponding to a single time-harmonic plane wave.  From a practical viewpoint, the inverse scattering method with a single far-field measurement would be of significant interests, but is highly challenging with very limited progress in the literature; we refer to \cite{CK,Isa,L,LZr,LYZ,Uhl} for related discussion and surveys on some existing development. For more practical considerations, we shall work in a even more challenging setting by assuming very little {\it a priori} knowledge of the underlying scattering object, which might consist of multiple components, and both the number of the components and the physical property of each component are unknown in advance. This setting would make our current study even more highly non-trivial, but on the other hand would be of significant practical importance when there is little {\it a priori} information available on the target object. Before we proceed to discuss more about our results, we next present the mathematical framework that we shall work within.

Let $\Omega$ be a bounded $C^2$ domain in $\mathbb{R}^3$ which supports an inhomogeneous isotropic EM medium characterized by the electric permittivity $\varepsilon(x)$, magnetic permeability $\mu(x)$, and conductivity $\sigma(x)$. Both $\varepsilon(x)$ and $\mu(x)$ are positive scalar functions and $\sigma(x)$ is a non-negative scalar function. It is assumed that $\Omega_e:=\mathbb{R}^3\backslash\overline{\Omega}$ is connected and $\Omega_e$ represents the uniformly homogeneous background space. Without loss of generality, we assume that $\Omega_e$ is the vacuum, i.e., $\varepsilon(x)=\mu(x)=1$ and $\sigma(x)=0$ for $x\in\Omega_e$. But we would like to emphasize that all our subsequent studies could be straightforwardly extended to the case that $\Omega_e$ is not necessarily the vacuum but uniformly homogeneous. The inverse problem that we shall consider in this paper is to recover the inhomogeneity $(\Omega; \varepsilon,\mu,\sigma)$ by measuring the wave field far away from the inhomogeneity produced by sending a single detecting EM wave. In the physical situation, the inhomogeneity $(\Omega; \varepsilon,\mu,\sigma)$ is referred to as a {\it scatterer}, and throughout we shall take the detecting/incident EM wave to be the time-harmonic plane wave,
\begin{equation}\label{eq:plane waves}
E^i(x)=pe^{i\omega x\cdot\theta'},\quad H^i(x)=\frac{1}{i\omega} \nabla\wedge E^i(x),\quad x\in\mathbb{R}^3\,,
\end{equation}
where $\omega\in\mathbb{R}_+$ denotes the frequency, $\wedge$ the exterior product, $\theta'\in\mathbb{S}^{2}$  the impinging direction, and $p\in\mathbb{R}^3$  the polarization with $p\cdot\theta'=0$. The EM wave scattering is governed by the Maxwell equations,
\begin{equation}\label{eq:Maxwell}
\begin{cases}
\displaystyle{\nabla\wedge E-i\omega\mu H=0}\ &\hspace*{-2cm} \mbox{in\ \ $\mathbb{R}^3$},\\
\displaystyle{\nabla\wedge H+i\omega\left(\varepsilon+i\frac{\sigma}{\omega} \right)E=0}\ &\hspace*{-2cm} \mbox{in\ \ $\mathbb{R}^3$},\\
E^-=E|_{\Omega},\quad E^+=(E-E^i)|_{\Omega_e},\\
\displaystyle{\lim_{|x|\rightarrow+\infty}|x|\left| (\nabla\wedge E^+)(x)\wedge\frac{x}{|x|}- i\omega E^+(x) \right|=0},
\end{cases}
\end{equation}
where $E$ and $H$ are the respective electric and magnetic fields, $E^-$ is the interior electric field,  $E^+$ is known as the scattered electric field (cf. \cite{CK,Ned}), and $H^-$ and $H^+$ are their interior and scattered magnetic counterparts.
Moreover, $E^+$ admits the following asymptotic development with the big $\mathcal{O}$ notaion
\begin{equation}\label{eq:farfield}
E^+(x)=\frac{e^{i\omega|x|}}{|x|} A\left(\frac{x}{|x|};\theta',p,\omega\right)+\mathcal{O}\left(\frac{1}{|x|^2}\right)
\end{equation}
uniformly in all directions ${x}/{|x|}$ as $|x|\rightarrow+\infty$. $A(\theta;\theta',p,\omega)$ with $\theta:=x/|x|\in\mathbb{S}^2$ is known as the electric far-field pattern. Here, we would like to distinguish two cases with $0\leq\sigma<+\infty$ and $\sigma=+\infty$. Following the terminologies in the literature, in the former case, $(\Omega;\varepsilon,\mu,\sigma)$ is called an EM medium inclusion and the corresponding inverse problem is referred to as {\it inverse medium scattering}, whereas in the latter case as $\sigma\rightarrow+\infty$, the medium becomes perfectly conducting, and $(\Omega;\varepsilon,\mu,+\infty)$ is called a perfect conductor or a perfectly electric conducting (PEC) obstacle, and the corresponding inverse problem is usually referred to as {\it inverse obstacle scattering}. For a perfectly conducting obstacle, both interior fields $E^-$ and  $H^-:=H|_{\Omega}$ would vanish inside $\Omega$, i.e., the EM fields cannot penetrate inside the object and are governed by
\begin{equation}\label{eq:Maxwell}
\begin{cases}
\displaystyle{\nabla\wedge E-i\omega H=0}\ &\hspace*{-2cm} \mbox{in\ \ $\Omega_e$},\\
\displaystyle{\nabla\wedge H+i\omega E=0}\ &\hspace*{-2cm} \mbox{in\ \ $\Omega_e$},\\
\displaystyle{\nu\wedge E=0}\ &\hspace*{-2cm} \mbox{on\ $\partial\Omega$},\\
\displaystyle{E^+=E-E^i}\ &\hspace*{-2cm} \mbox{in\ \ $\Omega_e$},\\
\displaystyle{\lim_{|x|\rightarrow+\infty}|x|\left| (\nabla\wedge E^+)(x)\wedge\frac{x}{|x|}- i\omega E^+(x) \right|=0},
\end{cases}
\end{equation}
where $\nu$ denotes the outward unit normal vector to $\partial\Omega$. The scattered field $E^+$ has the same asymptotic development as that in \eqref{eq:farfield}. Throughout the rest of the paper, we shall unify the notation of denoting a scatterer by $(\Omega; \varepsilon,\mu,\sigma)$, which could be an inclusion by taking $0\leq \sigma<+\infty$, or an obstacle by just taking $\sigma=+\infty$ disregarding the parameters $\varepsilon$ and $\mu$.

The inverse scattering problem that we consider is to recover $(\Omega;\varepsilon,\mu,\sigma)$ from the knowledge of $A(\theta;\theta',p,\omega)$ or to recover $\Omega$ from the knowledge of $A(\theta;\theta',p,\omega)$ if $\sigma=+\infty$. We will make use of single far-field measurement, i.e., $A(\theta;\theta',p,\omega)$ for all $\theta\in\mathbb{S}^2$ but fixed $\theta',p$ and $\omega$. Furthermore, we shall require very little {\it a priori} knowledge of the underlying scatterer, which could be composed of multiple components, and the number of the components is not required to be known in advance, and each component could be either a medium inclusion or an obstacle. Generically, in this extremely general setting, one cannot expect to recover all the details of the underlying scatterer by using only a single measurement. Instead, we would consider the recovery of locating the multiple components in the setting described above. Nevertheless, at this point, we would like to remark that our numerical experiments indicate that our proposed imaging scheme could also qualitatively reveal the supports/shapes of the scatterer components.

Specifically, two single-measurement locating schemes would be proposed for two separate cases depending on the size of the target scatterer. The first scheme is for locating scatterers of small size compared to the wavelength of the detecting plane wave. The multiple scatterers could be extremely general with very little {\it a priori} knowledge required. Each scatterer component could be either an impenetrable perfectly conducting obstacle or a penetrable inhomogeneous medium with an unknown content, and the number of the scatterer components is not required to be known in advance. The locating scheme is based on a novel indicator function $I_s(z)$ for $z\in\mathbb{R}^3$. If $z$ happens to be the location point of a scatterer component, then $z$ is a local maximum point for $I_s(z)$. Using the indicating behavior of $I_s(z)$, one could then locate all the scatterer components. In defining the indicator function, only inner-products of the electric far-field pattern and the vectorial spherical harmonics are involved. The indicating behavior is derived from the asymptotic expansion of the electric far-field pattern for small `point-like' scatterers. The expansion is further based on the low frequency EM scattering asymptotics (i.e., Raleigh approximation); see \cite{AN,AVV,APRT,DK,Mar}. But for our imaging scheme, the expansion would be formulated in terms of the vectorial spherical harmonics instead of the polarizability tensors. It is also interesting to mention that our numerical experiments show that the proposed scheme also works effectively for locating `partially-small' scatterer, namely, the scatterer is not `point-like' but `line-segment-like'. Furthermore, in addition to finding the locations of the scatterer components, the proposed scheme also shows some very promising feature in qualitatively imaging the supports/shapes of the unknown scatterers.

Our second scheme is for locating multiple perfectly conducting obstacles whose sizes are comparable to the detecting EM wavelength. For this case, we would require that the shape of each component must be from a certain known admissible class. That is, there might be multiple obstacles with an unknown number of components, but the shape of each obstacle component must be from a certain class of reference obstacles which is known in advance. Nevertheless, there could be multiple different reference obstacles. Other than the assumptions made above, no further {\it a priori} information is required for the multiple unknown scatterers. The number of the unknown scatterer components could be larger than the number of the reference obstacles; that is, some of the unknown scatterers possess the same shape. Moreover, it is not necessary for all the reference obstacles to be presented in the unknown scatterer. The setting considered would be of significant practical interests, e.g., in radar and sonar imaging. The second imaging scheme is based on $l'$ indicator functions, where $l'\in\mathbb{N}$ denotes the number of the reference obstacles. Similar to the first imaging scheme, in calculating the indicator functions, only inner products are involved and no inversion will be required. The proposed method is very efficient and robust to measurement noise, and could also be extended to include inhomogeneous medium components if certain generic condition is satisfied. To our best knowledge, this is the first imaging scheme in the literature for locating multiple unknown scatterers of regular size by a single EM far-field measurement. Rigorous mathematical justifications are provided for both schemes. In our theoretical analysis, we would impose the sparse distribution of the scatter components. However, we have conducted extensive numerical experiments, and numerical results show that even without the sparsity assumption, our proposed locating schemes still work very effectively.

The rest of the paper is organized as follows. In Section 2, we
develop the locating schemes, respectively, for the two separate
cases with small scatterers and regular-sized scatterers. Section~3
is devoted to the proofs of Theorems~\ref{thm:main1} and
\ref{thm:main2} stated in Section 2 to justify the indicating
behaviors of the indicator functions for the two locating schemes
proposed in Section 2. In Section 4, we present extensive numerical
experiments to illustrate the effectiveness of the proposed methods.
The work is concluded in Section~\ref{sec:Conclusion} with some
further discussions.

\section{The locating schemes}\label{sect:2}

In this section, we shall develop the two schemes for locating, respectively, multiple scatterers of small size and regular size. In order to ease the exposition, throughout the rest of the paper, we assume that $\omega\sim 1$, and hence the size of a scatterer can be expressed in terms of its Euclidean diameter.

\subsection{Locating small scatterers}\label{sect:small}

We first introduce the class of small EM scatterers for our current study. Let $l\in\mathbb{N}$ and let $D_j$, $1\leq j\leq l$, be bounded $C^2$ domains in $\mathbb{R}^3$. It is assumed that all $D_j$'s are convex and contain the origin. For $\rho\in\mathbb{R}_+$, we let $\rho D_j:=\{\rho x| x\in D_j\}$ and set
\[
\Omega_j=z_j+\rho D_j,\quad z_j\in\mathbb{R}^3,\ \ 1\leq j\leq l.
\]
Each $\Omega_j$ is referred to as a scatterer component and its content is given by $\varepsilon_j,\mu_j$ and $\sigma_j$. It is assumed that $\varepsilon_j>0, \mu_j>0$ and $\sigma_j\geq 0$ are all constants, except the case that $\sigma_j=+\infty$.  As remarked in Section~\ref{sec:intro}, if $\sigma_j$ is taken to be $+\infty$, then $D_j$ would be regarded as a perfectly conducting obstacle disregarding the parameters $\varepsilon_j$ and $\mu_j$. In the sequel, we shall reserve the letter $l$ to denote the number of components of a scatterer defined by
\begin{equation}\label{eq:scatterer}
\Omega=\bigcup_{j=1}^l \Omega_j\quad \mbox{and}\quad (\Omega;\varepsilon,\mu,\sigma)=\bigcup_{j=1}^l (\Omega_j; \varepsilon_j,\mu_j,\sigma_j).
\end{equation}
The parameter $\rho\in\mathbb{R}_+$ represents the relative size of the scatterer (or, more precisely, each of its component). We now make the following qualitative assumptions,
\begin{equation}\label{eq:qualitative assumptions}
\rho\ll 1\qquad \mbox{and}\qquad \mbox{dist}(z_j, z_{j'})\gg 1\quad \mbox{for\ $j\neq j'$, $1\leq j, j'\leq l$}.
\end{equation}
The assumption \eqref{eq:qualitative assumptions} means that compared to the wavelength of the detecting/incident wave, the relative size of each scatterer component is small and if there are multiple components, they must be sparsely distributed. Our numerical experiments in Section 4 could be more quantitative in this aspect, and it is numerically shown that if the size of the scatterer component is smaller than half a wavelength and the distance between two distinct components is bigger than half a wavelength, one could have a fine reconstruction by using our proposed scheme. In this sense, the qualitative assumption~\eqref{eq:qualitative assumptions} is needed only for our subsequent theoretical analysis of the proposed locating scheme. Furthermore, we would like to emphasize that most of the other restrictive assumptions introduced above are also mainly for the purpose of the theoretical justification, and the numerical experiments in Section 4 will show that our proposed imaging scheme works in a much more general setting. Specifically, the regularity of each reference component $D_j$ is not necessarily $C^2$-smooth, and it could be a Lipschitz domain, and moreover, it is not necessarily convex and not even simply connected, as long as it contains the origin and $\rho D_j$ for a small $\rho\in\mathbb{R}_+$ yields an appropriate domain of small size. Moreover, the content of each medium component is not necessarily constant, and it could be variable. Some of those restrictive assumptions could be relaxed from our theoretical justification in the following. However, it is our focus and emphasis on developing the locating scheme in the present work, and we would not appeal for a most general theoretical study.

In the sequel, without loss of generality, we can assume that there exists $l'\in \mathbb{N}\cup \{0\}$, $0\le l' \le l$ such that
\[
0\leq \sigma_j<+\infty\ \ \mbox{for\ \ $1\leq j\leq l'$}\qquad\mbox{and}\qquad \sigma_j=+\infty\ \ \mbox{for\ \ $l'+1\leq j\leq l$}.
\]
Then, we set
\begin{equation}\label{eq:scatterer2}
\Omega_m:=\bigcup_{j=1}^{l'}\Omega_j\qquad\mbox{and}\qquad \Omega_o:=\bigcup_{j=l'+1}^l \Omega_j
\end{equation}
to denote, respectively, the medium component and the obstacle component of a scatterer. We emphasize that both $l$ and $l'$ are unknown in advance, and $l'$ could be $0$ or $l$, corresponding to the case that $\Omega_m=\emptyset$ or $\Omega_o=\emptyset$. Moreover, the contents and the shapes of the two components are unknown in advance either and they are not even necessarily identical to each other, i.e.,
\[
(\Omega_j;\varepsilon_j,\mu_j,\sigma_j) \neq (\Omega_{j'}; \varepsilon_{j'},\mu_{j'}, \sigma_{j'}) \quad \mbox{for\ \ $j\neq j'$}.
\]
Corresponding to a single EM detecting plane wave \eqref{eq:plane waves}, the electromagnetic scattering is governed by the following Maxwell system
\begin{equation}\label{eq:Maxwell general}
\begin{cases}
\displaystyle{\nabla\wedge E-i\omega\left(1+\sum_{j=1}^{l'}(\mu_j-1)\chi_{\Omega_j} \right) H=0}\ &  \mbox{in\ \ $\mathbb{R}^3\backslash\overline{\Omega}_o$},\\
\displaystyle{\nabla\wedge H+\left(i\omega(1+\sum_{j=1}^{l'}(\varepsilon_j-1)\chi_{\Omega_j})- \sum_{j=1}^{l'}\sigma_{j}\chi_{\Omega_j} \right)E=0}\ & \mbox{in\ \ $\mathbb{R}^3\backslash\overline{\Omega}_o$},\\
E^-=E|_{\Omega_m},\ E^+=(E-E^i)|_{\mathbb{R}^3\backslash\overline{\Omega}},\ H^+=(H-H^i)|_{\mathbb{R}^3\backslash\overline{\Omega}}\,,\\
\nu\wedge E^+=-\nu\wedge E^i\hspace*{1cm}\mbox{on \ \ $\partial \Omega_o$},\\
\displaystyle{\lim_{|x|\rightarrow+\infty}|x|\left| (\nabla\wedge E^+)(x)\wedge\frac{x}{|x|}- i\omega E^+(x) \right|=0}.
\end{cases}
\end{equation}
It is known that there exists a unique pair of solutions $(E^+, H^+)\in H_{loc}(\text{curl}; \mathbb{R}^3\backslash\overline{\Omega}_0)\wedge H_{loc}(\text{curl}; \mathbb{R}^3\backslash\overline{\Omega}_0)$ to the Maxwell equations \eqref{eq:Maxwell general} (see \cite{Ned}). In the following, we shall write
\[
A(\theta;\Omega):=A(\theta; \bigcup_{j=1}^l (\Omega_j; \varepsilon_j, \mu_j, \sigma_j), \theta', p, \omega)
\]
to denote the electric far-field pattern corresponding to the EM wave fields in \eqref{eq:Maxwell general}. We shall also write for $1\leq j\leq l$,
\[
A(\theta;\Omega_j):=A(\theta; (\Omega_j; \varepsilon_j, \mu_j, \sigma_j), \theta', p, \omega)
\]
to denote the far-field pattern corresponding solely to the scatterer $(\Omega_j; \varepsilon_j, \mu_j, \sigma_j)$. Both $A(\theta; \Omega)$ and $A(\theta; \Omega_j)$ are real analytic functions on the unit sphere $\mathbb{S}^2$ (cf. \cite{CK,Ned}).

Next, we introduce the space of $L^2$ tangential fields on the unit sphere as follows,
\[
T^2(\mathbb{S}^2):=\{\mathbf{a}\in\mathbb{C}^3|\ \mathbf{a}\in L^2(\mathbb{S}^2)^3, \ \theta\cdot \mathbf{a}=0\ \ \mbox{a.e. on $\mathbb{S}^2$}\}.
\]
Note that $T^2(\mathbb{S}^2)$ is a linear subspace of $L^2(\mathbb{S}^2)^3$,  the space of vector $L^2$-fields on the unit sphere $\mathbb{S}^2$.
In addition, we recall the vectorial spherical harmonics
\begin{equation}\label{eq:orthonormal system}
\left\{
\begin{split}
U_n^m(\theta)&:=\frac{1}{\sqrt{n(n+1)}}\text{Grad}\, Y_n^m(\theta) \\
V_n^m(\theta)&:=\theta\wedge U_n^m(\theta)
\end{split} \right.
 \qquad  n\in\mathbb{N},\  \ m=-n,\cdots,n,
\end{equation}
which form a complete orthonormal system in $T^2(\mathbb{S}^2)$. In \eqref{eq:orthonormal system}, $Y_n^m(\theta)$ with $\theta\in\mathbb{S}^2$, $m=-n,\ldots,n$ are the spherical harmonics of order $n\geq 0$, and $\text{Grad}$ denotes the surface gradient operator on $\mathbb{S}$. It is known that $A(\theta; \Omega)$ and $A(\theta;\Omega_j)$, $1\leq j\leq l$ all belong to $T^2(\mathbb{S}^2)$. We define
\begin{equation}\label{eq:indicator value}
K^j:=\frac{\|A(\theta;\Omega_j)\|^2_{T^2(\mathbb{S}^2)}}{\|A(\theta;\Omega)\|^2_{T^2(\mathbb{S}^2)}},\quad 1\leq j\leq l,
\end{equation}
and
\begin{equation}\label{eq:indicator function}
\begin{split}
I_s(z):=\frac{1}{\|A(\theta;\Omega)\|^2_{T^2(\mathbb{S}^2)}}\sum_{m=-1,0,1}\bigg( & {\bigg|\left\langle A(\theta;\Omega), e^{i\omega (\theta'-\theta)\cdot z}\, U_1^m(\theta)  \right\rangle_{T^2(\mathbb{S}^2)}\bigg|^2}\\
& +{\bigg|\left\langle A(\theta;\Omega), e^{i\omega(\theta'-\theta)\cdot z}\, V_1^m(\theta)  \right\rangle_{T^2(\mathbb{S}^2)}\bigg|^2}         \bigg),
\end{split}
\end{equation}
where $z\in\mathbb{R}^3$, and $\langle \mathbf{u}, \mathbf{v}\rangle_{T^2(\mathbb{S}^2)}=\int_{\mathbb{S}^2} \mathbf{u}\cdot\overline{\mathbf{v}}\ ds_{\theta}$. Clearly, $K^j$ is a real number, whereas $I_s(z)$ is a real-valued function depending on the point $z\in\mathbb{R}^3$. We are now ready to present
the first main result of this paper, whose proof will be postponed to the next section.
\begin{thm}\label{thm:main1}
Let $(\Omega;\varepsilon,\mu,\sigma)$ be given by \eqref{eq:scatterer} satisfying \eqref{eq:qualitative assumptions}, and $K^j$, $1\leq j\leq l$, and $I_s(z)$ be defined in \eqref{eq:indicator value} and \eqref{eq:indicator function}, respectively. Set
\[
L=\min_{1\leq j, j'\leq l, j\neq j'}\text{\emph{dist}}(z_j, z_{j'})\gg 1.
\]
Then
\begin{equation}\label{eq:independence}
K^j=K_0^j+\mathcal{O}\left( \frac{1}{L}+\rho \right ),\quad 1\leq j\leq l,
\end{equation}
where $K_0^j$ is a positive number independent of $L$ and $\rho$.
Moreover there exists an open neighborhood of $z_j$, $\text{{neigh}}(z_j)$, $1\leq j\leq l$, such that
\begin{equation}\label{eq:thm11}
I_s(z)\leq K_0^j+\mathcal{O}\left(\frac 1 L+\rho \right )\quad\mbox{for\ \ $z\in \text{neigh}(z_j)$},
\end{equation}
and moreover $I_s(z)$ achieves its maximum at $z$ in $\text{neigh}(z_j)$,
\begin{equation}\label{eq:thm12}
I_s(z_j)=K_0^j+\mathcal{O}\left(\frac 1 L+\rho \right ),
\end{equation}
\end{thm}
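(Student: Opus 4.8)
The plan is to reduce the statement to three ingredients: a translation identity for far-field patterns, the low-frequency (Rayleigh) asymptotics of a single small component recast in terms of the degree-one vectorial spherical harmonics, and the decay of oscillatory integrals over $\mathbb S^2$, which decouples the well-separated components. First I would record that for $\Omega_j=z_j+\rho D_j$,
\[
A(\theta;\Omega_j)=e^{\,i\omega(\theta'-\theta)\cdot z_j}\,A(\theta;\rho D_j),
\]
where $A(\theta;\rho D_j)$ is the far-field of the $j$-th component re-centered at the origin; this follows from the change of variables $x\mapsto x+z_j$ in \eqref{eq:Maxwell general} together with \eqref{eq:farfield}. Invoking the known small-scatterer (dipole) asymptotics for a penetrable inclusion (and likewise for a PEC obstacle), I would then write, in $C^1(\mathbb S^2)$,
\[
A(\theta;\rho D_j)=\rho^{3}B_j(\theta)+\mathcal O(\rho^{4}),
\]
with $B_j\in T^2(\mathbb S^2)$ a fixed field, independent of $\rho$ and $L$, determined by $\theta'$, $p$ and the polarizability tensors of $D_j$. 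Two structural facts are crucial: the leading term is an electric-plus-magnetic-dipole field, so $B_j$ lies in the six-dimensional subspace $\mathcal H_1:=\mathrm{span}\{U_1^m,V_1^m:\ m=-1,0,1\}\subset T^2(\mathbb S^2)$; and $B_j\neq 0$, since the electric polarizability of a genuine component is nondegenerate and $p\neq0$. Setting $c_j:=\|B_j\|^2_{T^2(\mathbb S^2)}>0$ and using $|e^{i\omega(\theta'-\theta)\cdot z_j}|\equiv 1$, we get $\|A(\theta;\Omega_j)\|^2_{T^2(\mathbb S^2)}=\rho^{6}c_j+\mathcal O(\rho^{7})$.

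To prove \eqref{eq:independence} I would use the sparsity \eqref{eq:qualitative assumptions} to write $A(\theta;\Omega)=\sum_{j=1}^{l}A(\theta;\Omega_j)+R(\theta)$ with $\|R\|_{T^2(\mathbb S^2)}=\mathcal O(\rho^{6}/L)$, i.e.\ the multiple-scattering (Foldy--Lax) interaction is of strictly lower order under \eqref{eq:qualitative assumptions}. The cross terms are controlled by the stationary-phase estimate $\int_{\mathbb S^2}e^{-i\omega\theta\cdot d}f(\theta)\,ds_\theta=\mathcal O(|d|^{-1})$ for $f\in C^1(\mathbb S^2)$ (the phase $\theta\mapsto-\theta\cdot d$ has two non-degenerate critical points $\pm d/|d|$), applied with $d=z_j-z_k$, $|d|\geq L$; since each factor $A(\cdot;\rho D_j)$ is $\mathcal O(\rho^3)$ in $C^1$, this gives $\langle A(\cdot;\Omega_j),A(\cdot;\Omega_k)\rangle_{T^2(\mathbb S^2)}=\mathcal O(\rho^{6}/L)$ for $j\neq k$. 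Summing yields $\|A(\cdot;\Omega)\|^2_{T^2(\mathbb S^2)}=\rho^{6}\sum_{k}c_k+\mathcal O(\rho^{6}/L+\rho^{7})$, and dividing by this quantity establishes \eqref{eq:independence} with $K_0^j=c_j/\sum_k c_k>0$.

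For the indicator function, fix $j$, let $\text{neigh}(z_j)$ be a sufficiently small fixed ball, and take $z\in\text{neigh}(z_j)$, $w:=z-z_j$. The key observation is that $e^{i\omega(\theta'-\theta)\cdot z}U_1^m$ and $e^{i\omega(\theta'-\theta)\cdot z}V_1^m$, $m=-1,0,1$, are again orthonormal in $T^2(\mathbb S^2)$ (the unimodular phase cancels in the inner product), so $I_s(z)\,\|A(\cdot;\Omega)\|^2_{T^2(\mathbb S^2)}$ is the squared norm of an orthogonal projection. Substituting $A(\cdot;\Omega)=\sum_k A(\cdot;\Omega_k)+R$ and pairing against these test fields: the terms $k\neq j$ contribute $\mathcal O(\rho^{3}/L)$ each by the same oscillatory-integral bound (now $d=z_k-z$, $|d|\gtrsim L$), the $R$-term contributes $\mathcal O(\rho^{6}/L)$, and the $k=j$ term equals $\rho^{3}\langle g_w,U_1^m\rangle_{T^2(\mathbb S^2)}+\mathcal O(\rho^{4})$ (likewise for $V_1^m$), where $g_w(\theta):=e^{-i\omega(\theta'-\theta)\cdot w}B_j(\theta)\in T^2(\mathbb S^2)$. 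Therefore
\[
I_s(z)\,\|A(\cdot;\Omega)\|^2_{T^2(\mathbb S^2)}=\rho^{6}\,\|P_1 g_w\|^2_{T^2(\mathbb S^2)}+\mathcal O(\rho^{6}/L+\rho^{7}),
\]
where $P_1$ is the orthogonal projection onto $\mathcal H_1$. Since $|e^{-i\omega(\theta'-\theta)\cdot w}|\equiv1$ we have $\|g_w\|_{T^2(\mathbb S^2)}=\|B_j\|_{T^2(\mathbb S^2)}$, hence by Bessel's inequality $\|P_1 g_w\|^2\leq c_j$ for every $w$, with equality at $w=0$ because $g_0=B_j\in\mathcal H_1$. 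Dividing by $\|A(\cdot;\Omega)\|^2_{T^2(\mathbb S^2)}=\rho^{6}\sum_k c_k\,(1+\mathcal O(1/L+\rho))$ then gives \eqref{eq:thm11}, the equality \eqref{eq:thm12} at $z=z_j$, and the fact that the maximum of $I_s$ over $\text{neigh}(z_j)$ equals $K_0^j+\mathcal O(1/L+\rho)$ and is attained at $z_j$ up to that error.

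I expect the main obstacle to be making the single-component expansion $A(\theta;\rho D_j)=\rho^{3}B_j(\theta)+\mathcal O(\rho^{4})$ quantitative and uniform in $C^1(\mathbb S^2)$, with the remainder explicitly controlled and treated in a unified way for penetrable inclusions and PEC obstacles, and identifying $B_j$ as a nonzero element of $\mathcal H_1$; alongside this, one must control the Foldy--Lax remainder $R$ in the sparse regime \eqref{eq:qualitative assumptions}. Once these two inputs are in hand, the orthonormal-test-field/Bessel argument and the stationary-phase decoupling are routine.
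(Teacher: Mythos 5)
Your proposal follows essentially the same route as the paper: the translation identity plus the low-frequency dipole expansion recast in the degree-one vectorial spherical harmonics (the paper's Lemma 3.2), the decomposition of $A(\theta;\Omega)$ into single-component far fields plus an $\mathcal{O}(1/L)$ interaction remainder (the paper's Lemma 3.1, proved by integral equations), oscillatory-integral decay to decouple the distant components, and an orthogonality/Cauchy--Schwarz argument to show the maximum is attained at $z=z_j$. Your Bessel-inequality/projection framing and the explicit stationary-phase rate $\mathcal{O}(|d|^{-1})$ are slightly cleaner than the paper's componentwise Cauchy--Schwarz and its appeal to the Riemann--Lebesgue lemma, but the structure and key lemmas are the same.
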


\begin{rem}\label{rem:indicator}
Clearly, $I_s(z)$ possesses the indicating behavior which could be used to identify the location point $z_j$'s of the scatterer components $\Omega_j$'s. Such behavior is more evident if one considers the case that $\Omega$ has only one component, i.e., $l=1$. In the one-component case, one would have that
\[
I_s(z)<1+\mathcal{O}(\rho)\quad\mbox{for all \ $z\in\mathbb{R}^3\backslash\{z_1\}$},
\]
but
\[
I_s(z_1)=1+\mathcal{O}(\rho).
\]
That is, $z_1$ is a global maximum point for $I(z)$.
\end{rem}

Using Theorem~\ref{thm:main1}, we can formulate our first imaging scheme of locating multiple small scatterer components as follows, which shall be referred to as {\it Scheme S} in the rest of the paper.

\medskip

\noindent{\bf The first single-shot locating method: \textbf{Scheme S}}

\begin{enumerate}

\item[{\bf Step 1.}] For an unknown EM scatterer $\Omega$ in \eqref{eq:scatterer}, collect the far-field data by sending a
 single detecting EM plane wave specified by \eqref{eq:plane waves}.

\item[{\bf Step 2.}] Select a sampling region with a mesh $\mathcal{T}_h$ containing $\Omega$.

\item[{\bf Step 3.}] For each point $z\in \mathcal{T}_h$, calculate $I_s(z). $

\item[{\bf Step 4.}] Locate all the significant local maxima of $I_s(z)$ on $\mathcal{T}_h$, which represent the locations of the scatterer components.
\end{enumerate}

\subsection{Locating regular-sized scatterers}

In this section, we consider the locating of multiple scatterers of regular size. We shall present our scheme for the case that all the scatterer components are perfectly conducting obstacles. Later, we would remark how our method could be extended to include the inhomogeneous medium components.
Let $M_j\subset\mathbb{R}^3$, $1\leq j\leq l$, be bounded simply connected $C^2$ domains that contain the origin.
Let
\begin{equation}\label{eq:regular scatterer}
Q:=\bigcup_{j=1}^l Q_j=\bigcup_{j=1}^l z_j+M_j,\quad z_j\in\mathbb{R}^3,\ \ 1\leq j\leq l.
\end{equation}
Each $Q_j:=z_j+M_j$ is a scatterer component, and it is assumed to be a perfectly conducting obstacle. Illuminated by a single incident EM plane wave \eqref{eq:plane waves}, the EM scattering by the scatterer $Q$ can be described by the following Maxwell system
\begin{equation}\label{eq:Maxwell general regular}
\begin{cases}
\displaystyle{\nabla\wedge E-i\omega H=0}\ & \hspace*{-2cm} \mbox{in\ \ $\mathbb{R}^3\backslash\overline{Q}$},\\
\displaystyle{\nabla\wedge H+i\omega E=0}\ & \hspace*{-2cm} \mbox{in\ \ $\mathbb{R}^3\backslash\overline{Q}$},\\
\ E^+=(E-E^i)|_{\mathbb{R}^3\backslash\overline{Q}},\\
\nu\wedge E^+=-\nu\wedge E^i & \hspace*{-2cm} \mbox{on \ \ $\partial Q$},\\
\displaystyle{\lim_{|x|\rightarrow+\infty}|x|\left| (\nabla\wedge E^+)(x)\wedge\frac{x}{|x|}- i\omega E^+(x) \right|=0}.
\end{cases}
\end{equation}
In the sequel, we write
\begin{equation}\label{eq:farfield regular}
A(\theta;Q):=A(\theta; \bigcup_{j=1}^l Q_j)=A(\theta;\bigcup_{j=1}^l z_j+M_j)
\end{equation}
to denote the far-field pattern corresponding to the EM fields in \eqref{eq:Maxwell general regular}. It is assumed that
\begin{equation}\label{eq:regular assumption}
\mbox{diam}(Q_j)=\mbox{diam}(M_j)\sim 1, \ \ 1\leq j\leq l;
\end{equation}
and
\begin{equation}\label{eq:spare regular}
L=\min_{1\leq j, j'\leq l, j\neq j'}\text{dist}(z_j, z_{j'})\gg 1.
\end{equation}
That is, the size of the underlying scatterer components are comparable to the wavelength of the detecting EM plane wave. This is in sharp difference from our study in Section~\ref{sect:small}, where the scatterer components are of small size compared to the detecting EM wavelength. The qualitative condition \eqref{eq:spare regular} states that the scatterer components are sparsely distributed, and we would like to emphasize again that this is mainly needed for our subsequent theoretical analysis of the proposed locating scheme. Our numerical examples in Section 4 shows that as long as the distance between different components are bigger than half a wavelength, the proposed scheme would yield a fine reconstruction.
Furthermore, we introduce an admissible reference scatterer space
\begin{equation}\label{eq:reference space}
\mathscr{S}:=\{\Sigma_j\}_{j=1}^{l'},
\end{equation}
where each $\Sigma_j\subset\mathbb{R}^3$ is a bounded simply connected $C^2$ domain that contains the origin and
\begin{equation}\label{eq:assumption 1}
\Sigma_j\neq \Sigma_{j'},\quad \mbox{for}\ \ j\neq j',\  1\leq j, j'\leq l'.
\end{equation}
For the present study, we assume that
\begin{equation}\label{eq:shape known}
M_j\in\mathscr{S},\quad j=1,2,\ldots,l.
\end{equation}
That is, in the practical situation, the shapes of the underlying scatterers are known in advance in the sense that each component must be of a shape from a known admissible class. But we do not know the locations of those scatterer components and intend to recover them from a single wave detection. We would like to remark that it is not necessary to have $l=l'$. It may have $l>l'$, and in this case, there must be more than one component in $Q$ who has the same shape from $\mathscr{S}$; and it may also have $l<l'$, and in this case, there are less scatterers presented than the known admissible scatterers.

Next we introduce $l'$ indicator functions as follows,
\begin{equation}\label{eq:indicator regular}
I^k_r(z)=\frac{\bigg| \langle A(\theta;Q), e^{i\omega(\theta'-\theta)\cdot z} A(\theta; \Sigma_k)  \rangle_{T^2(\mathbb{S}^2)}  \bigg|}{\| A(\theta; \Sigma_k) \|^2_{T^2(\mathbb{S}^2)}},\quad k=1,2,\ldots, l',
\end{equation}
where $A(\theta;\Sigma_k)$ denotes the far-field pattern corresponding to the perfectly conducting obstacle $\Sigma_k$, $1\leq k\leq l'$.
In the following, we shall show that the $l'$ indictor functions introduced in \eqref{eq:indicator regular} can be used to locate the scatterer components $Q_j$ of $Q$. To that end, we shall make the following generic assumption that
\begin{equation}\label{eq:generic assumption}
A(\theta;\Sigma_k)\neq A(\theta; \Sigma_{k'}),\quad \mbox{for}\ \ k\neq k',\ 1\leq k, k'\leq l'.
\end{equation}
Assumption \eqref{eq:generic assumption} is closely related to a longstanding problem in the inverse scattering theory (cf. \cite{CCM,CK,LZr}): whether or not  can one uniquely determine an obstacle by a single wave measurement? That is, if two obstacles produce the same far-field data corresponding to a single incident plane wave, can one conclude that they must be the same? Though such uniqueness result is widespreadly believed to be true, it still remains open, particularly for an obstacle of regular size. The uniqueness is proved for obstacles of polyhedral type in \cite{L,LYZ}. Assumption~\eqref{eq:generic assumption} on the reference scatterers is of critical importance in our subsequently proposed locating scheme. Nonetheless, since the admissible class $\mathscr{S}$ is known, \eqref{eq:generic assumption} can be verified in advance.  Moreover, since $\mathscr{S}$ is known in advance, we can assume by reordering if necessary that
\begin{equation}\label{eq:assumption2}
\| A(\theta;\Sigma_{k}) \|_{T^2(\mathbb{S}^2)}\geq \| A(\theta;\Sigma_{k+1}) \|_{T^2(\mathbb{S}^2)},\quad k=1,2,\ldots,l'-1.
\end{equation}
That is, the sequence $\{\| A(\theta;\Sigma_{k}) \|_{T^2(\mathbb{S}^2)}\}_{k=1}^{l'}$ is nonincreasing. Next, we present a key theorem, which is the basis of our subsequent locating scheme.

\begin{thm}\label{thm:main2}
Let $Q$ be given in \eqref{eq:regular scatterer}, and the obstacle components are assumed to satisfy \eqref{eq:spare regular} and \eqref{eq:shape known}. The admissible reference scatterer space $\mathscr{S}$ is assumed to satisfy \eqref{eq:generic assumption} and \eqref{eq:assumption2}. Consider the indicator function $I_r^1$ introduced in \eqref{eq:indicator regular}. Suppose for some $j_0\in \{1,2,\ldots,l\}$, $M_{j_0}=\Sigma_1$, whereas $M_j\neq \Sigma_1$ for $j\in \{1,2,\ldots,l\}\backslash\{ j_0\}$. Then for each $z_j$, $j=1,2,\ldots,l$, there exists an open neighborhood of $z_j$, $neigh(z_j)$, such that
\begin{enumerate}
\item[(i).]~if $j=j_0$, then
\begin{equation}\label{eq:further ind 1}
\widetilde{I}_r^1(z):=|I_r^1(z)-1|\leq \mathcal{O}\left( \frac 1 L  \right ),\quad z\in neigh(z_{j_0}),
\end{equation}
and moreover, $z_{j_0}$ is a local minimum point for $\widetilde{I}_r^1(z)$;

\item[(ii).]~if $j\neq j_0$, then there exists $\epsilon_0>0$ such that
\begin{equation}\label{eq:further ind 2}
\widetilde{I}_r^1(z):=|I_r^1(z)-1|\geq \epsilon_0+\mathcal{O}\left( \frac 1 L \right ),\quad z\in neigh(z_j).
\end{equation}
\end{enumerate}
\end{thm}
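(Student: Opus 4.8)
The plan is to reduce the multi-component far-field pattern $A(\theta;Q)$ to a superposition of translated single-component patterns plus a controlled remainder, and then to exploit the near-orthogonality of the translated patterns when the components are far apart. First I would establish the decomposition
\[
A(\theta;Q)=\sum_{j=1}^l e^{i\omega(\theta'-\theta)\cdot z_j}A(\theta;M_j)+\mathcal{O}(1/L),
\]
which should follow from the translation relation for far-field patterns (a shift of the scatterer by $z_j$ multiplies the pattern by $e^{i\omega(\theta'-\theta)\cdot z_j}$) together with an estimate showing that the multiple scattering interaction between well-separated components contributes only $\mathcal{O}(1/L)$; this is essentially the analogue, in the regular-size setting, of the asymptotic decoupling used for Theorem~\ref{thm:main1}, and I would invoke layer-potential estimates for the exterior Maxwell problem on $\mathbb{R}^3\setminus\overline{Q}$ to make the $\mathcal{O}(1/L)$ rigorous. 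Here the PEC boundary condition and well-posedness from \cite{Ned} are what make the iteration converge.

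Next I would plug this decomposition into the numerator of $I_r^1(z)$ and expand the inner product. Using $M_{j_0}=\Sigma_1$ we get the ``diagonal'' term $e^{i\omega(\theta'-\theta)\cdot(z_{j_0}-z)}$-weighted pattern paired with $e^{i\omega(\theta'-\theta)\cdot z}A(\theta;\Sigma_1)$; when $z=z_{j_0}$ the exponential collapses and this term equals exactly $\|A(\theta;\Sigma_1)\|_{T^2(\mathbb{S}^2)}^2$, giving $I_r^1(z_{j_0})=1+\mathcal{O}(1/L)$. The cross terms, pairing $e^{i\omega(\theta'-\theta)\cdot z_j}A(\theta;M_j)$ against $e^{i\omega(\theta'-\theta)\cdot z}A(\theta;\Sigma_1)$ for $j\neq j_0$, carry an oscillatory factor $e^{i\omega(\theta'-\theta)\cdot(z_j-z)}$ with $|z_j-z|\sim L$ for $z\in neigh(z_{j_0})$, and a stationary-phase / non-stationary-phase argument on $\mathbb{S}^2$ (the phase $(\theta'-\theta)\cdot(z_j-z)$ as a function of $\theta$) shows each such integral is $\mathcal{O}(1/L)$. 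Combining these proves part (i): $\widetilde I_r^1(z)=|I_r^1(z)-1|=\mathcal{O}(1/L)$ on $neigh(z_{j_0})$. For the local-minimum claim I would differentiate in $z$: at $z=z_{j_0}$ the modulus of the numerator is, up to $\mathcal{O}(1/L)$, $|\int_{\mathbb{S}^2}e^{i\omega(\theta'-\theta)\cdot(z_{j_0}-z)}|A(\theta;\Sigma_1)|^2\,ds_\theta|$, which is a function of $z$ with a strict local maximum at $z=z_{j_0}$ (its value there is the $L^1$-norm of $|A(\theta;\Sigma_1)|^2$, while elsewhere the oscillation strictly reduces the modulus); hence $\widetilde I_r^1$ has a strict local minimum there, and the $\mathcal{O}(1/L)$ perturbation does not destroy it provided the neighborhood is fixed independently of $L$.

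For part (ii), fix $j\neq j_0$ and $z\in neigh(z_j)$. Now the would-be diagonal term pairs $e^{i\omega(\theta'-\theta)\cdot z_j}A(\theta;M_j)$ against $e^{i\omega(\theta'-\theta)\cdot z}A(\theta;\Sigma_1)$; at $z=z_j$ this is $\langle A(\theta;M_j),A(\theta;\Sigma_1)\rangle_{T^2(\mathbb{S}^2)}$, and all remaining terms are oscillatory hence $\mathcal{O}(1/L)$. Since $M_j\neq\Sigma_1$ and both are reference obstacles, the generic assumption \eqref{eq:generic assumption} gives $A(\theta;M_j)\neq A(\theta;\Sigma_1)$, so by Cauchy--Schwarz (with equality only for parallel vectors) together with the normalization \eqref{eq:assumption2} which guarantees $\|A(\theta;\Sigma_1)\|\ge\|A(\theta;M_j)\|$, we obtain
\[
\frac{|\langle A(\theta;M_j),A(\theta;\Sigma_1)\rangle_{T^2(\mathbb{S}^2)}|}{\|A(\theta;\Sigma_1)\|_{T^2(\mathbb{S}^2)}^2}\le\frac{\|A(\theta;M_j)\|_{T^2(\mathbb{S}^2)}}{\|A(\theta;\Sigma_1)\|_{T^2(\mathbb{S}^2)}}<1,
\]
with a strict gap; setting $\epsilon_0$ to be (a fixed fraction of) this gap, minimized over the finitely many admissible shapes $M_j\in\mathscr{S}\setminus\{\Sigma_1\}$, yields $\widetilde I_r^1(z)=|I_r^1(z)-1|\ge\epsilon_0+\mathcal{O}(1/L)$.

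I expect the main obstacle to be the rigorous justification of the $\mathcal{O}(1/L)$ decoupling estimate in the first step: unlike the small-scatterer case where a low-frequency (Rayleigh) expansion linearizes the problem, here each component is of regular size, so one must control the multiply-scattered field via exterior Maxwell layer potentials and show that the Neumann-series iteration for the coupled system converges with the interaction kernel decaying like $1/L$ uniformly on $\mathbb{S}^2$. A secondary technical point is making the ``oscillatory integral is $\mathcal{O}(1/L)$'' step uniform in $z$ over the neighborhood and uniform over $\theta\in\mathbb{S}^2$, which requires care near the directions $\theta=\pm\theta'$ where the phase gradient in $\theta$ degenerates; a non-stationary-phase estimate away from those directions plus a direct measure estimate near them should suffice, but it is the part most likely to need a careful lemma.
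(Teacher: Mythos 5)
Your proposal follows essentially the same route as the paper: the $\mathcal{O}(1/L)$ decomposition $A(\theta;Q)=\sum_j e^{i\omega(\theta'-\theta)\cdot z_j}A(\theta;M_j)+\mathcal{O}(1/L)$ (the paper's Lemma~\ref{lem:multiple scatterers}, proved exactly by the layer-potential/Fredholm iteration you describe), followed by isolating the diagonal term in the inner product, killing the cross terms by oscillation, and using strict Cauchy--Schwarz together with \eqref{eq:generic assumption} and the ordering \eqref{eq:assumption2} to produce the gap $\epsilon_0$ in part (ii). Your stationary-phase treatment of the cross terms is in fact slightly more careful than the paper's appeal to the Riemann--Lebesgue lemma (which by itself gives only $o(1)$ rather than the stated $\mathcal{O}(1/L)$ rate), and your explicit argument for the local-minimum claim fills in a detail the paper leaves implicit, but these are refinements of the same proof rather than a different one.
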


By using Theorem~\ref{thm:main2}, our proposed locating scheme can be proceeded as follows, which shall be referred to as {\it Scheme R} in the rest of the paper.

\medskip

\noindent{\bf The second single-shot locating method: \textbf{Scheme R}}

\begin{enumerate}

\item[{\bf Step 1.}] For an unknown EM scatterer $Q$ in \eqref{eq:regular scatterer}, collect the far-field data by sending the detecting EM plane wave specified by  \eqref{eq:plane waves}.

\item[{\bf Step 2.}] Select a sampling region with a mesh $\mathcal{T}_h$ containing $Q$.

\item[{\bf Step 3.}] Collect in advance the far-field patterns associated with the admissible reference scatterer space $\mathscr{S}$
                      in \eqref{eq:reference space}, and reorder $\mathscr{S}$  if necessary to make it satisfy
\eqref{eq:assumption2}, and also verify the generic assumption \eqref{eq:generic assumption}.

\item[{\bf Step 4.}] Set $k=1$.

\item[{\bf Step 5.}] For each point $z\in \mathcal{T}_h$, calculate $I_r^k(z)$ (or $\widetilde{I}_r^k(z)=|I_r^k(z)-1|$).

\item[{\bf Step 6.}] Locate all those significant local maxima of $I_r^k(z)$ such that $I_r^k(z)\sim 1$ (or the minima of $\widetilde{I}_r^k(z)$ on $\mathcal{T}_h$ such that $\widetilde{I}_r^k(z)\ll 1$), where scatterer components of the form $z+\Sigma_k$ is located.

\item[{\bf Step 7.}] Trim all those $z+\Sigma_k$ found in {\bf Step 6} from $\mathcal{T}_h$.

\item[{\bf Step 8.}] If $\mathcal{T}_h=\emptyset$ or $k=l'$, then Stop; otherwise, set $k=k+1$, and go to {\bf Step 5}.

\end{enumerate}

It can be seen that our locating scheme $R$ progresses in a
recursive manner. For a scatterer $Q$ with multiple components, one
firstly locates the sub-components of shape $\Sigma_1$, which have
the most prominent scattering effect among all the scatterer
components. After locating all the sub-components of shape
$\Sigma_1$, one can exclude those components from the searching
region, and then repeats the same procedure to locate all the
sub-components of shape $\Sigma_2$, which, according to the ordering
\eqref{eq:assumption2}, have the most prominent scattering effect
among all the scattering components that still remain in the
searching region. Clearly, this procedure can be continued till one
locates all the scatter components. Theorem~\ref{thm:main2} remains
true for inhomogeneous medium scatterers if the generic condition
\eqref{eq:generic assumption} still holds and in that case, the
locating \textbf{Scheme R} could be extended to include
inhomogeneous medium components as well; see
Remark~\ref{rem:extension S} in the following for related
discussions.

\section{Proofs of Theorems~\ref{thm:main1} and \ref{thm:main2}}\label{sect:3}

This section is devoted to the proofs of Theorems~\ref{thm:main1} and \ref{thm:main2}, which are the theoretical cores for our locating \textbf{Schemes S} and \textbf{R}, respectively. We first derive two key lemmas.

\begin{lem}\label{lem:multiple scatterers}
Let
\begin{equation}\label{eq:multiple scatterers}
\Gamma=\bigcup_{j=1}^l \Gamma_j\quad \mbox{and}\quad (\Gamma;\varepsilon,\mu,\sigma)=\bigcup_{j=1}^l (\Gamma_j; \varepsilon_j,\mu_j,\sigma_j),
\end{equation}
be a scatterer with multiple components, where each $\Gamma_j$ is a bounded simply connected $C^2$ domain in $\mathbb{R}^3$. Assume that
\begin{equation}\label{eq:spare distributation}
L=\min_{1\leq j, j'\leq l, j\neq j'}\text{dist}(\Gamma_j, \Gamma_{j'})\gg 1.
\end{equation}
Then we have
\begin{equation}\label{eq:addition}
A(\theta;\Gamma)=\sum_{j=1}^l A(\theta; \Gamma_j)+\mathcal{O}(L^{-1}),
\end{equation}
where $A(\theta;\Gamma)$ and $A(\theta;\Gamma_j)$ denote the far-field patterns, respectively, corresponding to $(\Gamma;\varepsilon,\mu,\sigma)$ and $(\Gamma_j; \varepsilon_j,\mu_j,\sigma_j)$.
\end{lem}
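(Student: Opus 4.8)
The plan is to reduce the composite scattering problem to a linear system of boundary and volume integral equations with a block structure, to show that the off-diagonal (coupling) blocks have operator norm $\mathcal{O}(L^{-1})$, and then to read off the far-field pattern from the solution densities. Write $\Gamma_m=\bigcup_{j\le l'}\Gamma_j$ for the union of the medium components and $\Gamma_o=\bigcup_{j>l'}\Gamma_j$ for the union of the obstacle components (with the convention that $\sigma_j=+\infty$ marks an obstacle). Using the outgoing Maxwell Green's tensor $\mathbf{G}_\omega$, one represents the scattered field $E^+$ of $(\Gamma;\varepsilon,\mu,\sigma)$ by a tangential surface density on $\partial\Gamma_o$ together with a Lippmann--Schwinger-type vector volume density supported in $\Gamma_m$. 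Choosing a uniquely solvable boundary--volume integral formulation (e.g.\ of combined-field type, so as to avoid spurious interior resonances), and using the well-posedness of the forward problem (cf.~\cite{Ned}), the Maxwell system (the analogue of \eqref{eq:Maxwell general} with $\Gamma$ in place of $\Omega$) is recast as $(\mathcal{A}_0+\mathcal{B})\phi=f$, where $f$ is assembled from the single incident field $E^i$, $\mathcal{A}_0=\text{diag}(\mathcal{A}_0^1,\dots,\mathcal{A}_0^l)$ collects the single-component integral operators (each boundedly invertible by the well-posedness of the corresponding one-component problem), and $\mathcal{B}$ is the off-diagonal part whose $(k,j)$ block, $j\neq k$, has smooth kernel built from $\mathbf{G}_\omega(x,y)$ and its derivatives with $x$ in a fixed neighbourhood of $\Gamma_k$ and $y$ near $\Gamma_j$.

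Next I would quantify the coupling. For $x$ near $\Gamma_k$ and $y$ near $\Gamma_j$ with $j\ne k$ one has $|x-y|\ge L\gg1$, and $\mathbf{G}_\omega(x,y)$ together with its first and second derivatives is $\mathcal{O}(|x-y|^{-1})$ (the $|x-y|^{-2}$ and $|x-y|^{-3}$ contributions being dominated, since $\omega\sim1$). Because each $\Gamma_j$ is a fixed bounded $C^2$ domain, the trace and volume function spaces and the constants in the mapping properties of the layer/volume potentials do not degenerate as the components drift apart; hence $\|\mathcal{B}\|=\mathcal{O}(L^{-1})$. Consequently, for $L$ sufficiently large $\mathcal{A}_0+\mathcal{B}$ is invertible by a Neumann series with $\|(\mathcal{A}_0+\mathcal{B})^{-1}-\mathcal{A}_0^{-1}\|=\mathcal{O}(L^{-1})$, so that $\phi=\phi_0+\mathcal{O}(L^{-1})$, where $\phi_0=\mathcal{A}_0^{-1}f$ is the stacked density whose $j$-th block is exactly the density solving the single-scatterer problem for $(\Gamma_j;\varepsilon_j,\mu_j,\sigma_j)$ with incident wave $E^i$, i.e.\ the one that produces $A(\theta;\Gamma_j)$. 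Finally, the electric far-field pattern is obtained from the densities by a fixed bounded linear map (the far-field operator associated with $\mathbf{G}_\omega$), uniformly bounded in $\theta\in\mathbb{S}^2$; applying it to $\phi$ and using linearity yields $A(\theta;\Gamma)=\sum_{j=1}^l A(\theta;\Gamma_j)+\mathcal{O}(L^{-1})$ uniformly in $\theta$, which is \eqref{eq:addition}.

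I expect the main obstacle to be the uniformity in $L$: one must check that the diagonal blocks $\mathcal{A}_0^j$ are invertible with inverses bounded independently of the translations $z_j$ (here the fixed $C^2$/convex reference shapes and the single-component well-posedness estimates enter, applied to a fixed configuration), and that $\|\mathcal{B}\|=\mathcal{O}(L^{-1})$ is genuinely uniform over all admissible configurations; some bookkeeping is also needed to put the mixed obstacle/medium system into a single framework. An essentially equivalent, more PDE-flavoured route is to write the Foldy--Lax relation $E^+=\sum_{j=1}^l v_j$ with $v_j=\mathcal{S}_j\big(E^i+\sum_{k\ne j}v_k\big)$, where $\mathcal{S}_j$ is the (bounded) incident-to-scattered operator of $\Gamma_j$ alone; since a radiating solution decays like the reciprocal of the distance, each $v_k$ restricted to a neighbourhood of $\Gamma_j$ is $\mathcal{O}(L^{-1})$, so the off-diagonal part of this fixed-point system has small norm, the iteration converges, $v_j=\mathcal{S}_j(E^i)+\mathcal{O}(L^{-1})$, and taking far-fields gives \eqref{eq:addition} with the same error term.
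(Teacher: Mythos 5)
Your proposal is correct and follows essentially the same route as the paper: the paper likewise recasts the problem (concretely for $l=2$, one obstacle plus one medium, via the Stratton--Chu representation) as a coupled boundary--volume integral system, bounds the off-diagonal coupling terms by $\mathcal{O}(L^{-1})$ using the decay of $\Phi(x,y)$, inverts the decoupled diagonal operators (Fredholm of index zero, with the interior-resonance caveat handled by a combined-field modification exactly as you note), and reads off \eqref{eq:addition} from the resulting densities. The only difference is presentational: you phrase it as a general $l\times l$ block system with a Neumann-series perturbation, while the paper works the two-component case explicitly and asserts the general case follows similarly.
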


\begin{proof}

We shall only consider a specific case with $l=2$, and $\Gamma_1$ is an obstacle component while $\Gamma_2$ is a medium component. Nevertheless, the general case can be proved following a completely similar manner. Moreover, we shall assume that $\omega$ is not an interior EM eigenvalue for $\Gamma_1$ (see Remark~\ref{rem:eigenvalue}).  For this case, the EM scattering corresponding to $\Gamma$ in \eqref{eq:multiple scatterers} is governed by the Maxwell system~\eqref{eq:Maxwell general} with $\Omega_o$ replaced by $\Gamma_1$ and $\Omega_m$ replaced by $(\Gamma_2;\varepsilon_2,\mu_2,\sigma_2)$.
%\begin{equation}\label{eq:l1 Maxwell}
%\begin{cases}
%\displaystyle{\nabla\wedge E-i\omega\left(1+(\mu_2-1)\chi_{\Gamma_2} \right) H=0}\ &  \mbox{in\ \ $\mathbb{R}^3\backslash\overline{\Gamma}_1$},\\
%\displaystyle{\nabla\wedge H+\left(i\omega(1+(\varepsilon_2-1)\chi_{\Gamma_2})- \sigma_{2}\chi_{\Gamma_2} \right)E=0}\ & \mbox{in\ \ $\mathbb{R}^3\backslash\overline{\Gamma}_1$},\\
%E^-=E|_{\Gamma_2},\ E^+=(E-E^i)|_{\mathbb{R}^3\backslash\overline{\Gamma}},\ H^+=(H-H^i)|_{\mathbb{R}^3\backslash\overline{\Gamma}}\\
%\nu\wedge E^+=-\nu\wedge E^i\hspace*{1cm}\mbox{on \ \ $\partial \Gamma_1$},\\
%\displaystyle{\lim_{|x|\rightarrow+\infty}|x|\left| (\nabla\wedge E^+)(x)\wedge\frac{x}{|x|}- i\omega E^+(x) \right|=0}.
%\end{cases}
%\end{equation}
We know that there exists a unique pair of solutions $E\in (H_{loc}^1(\mathbb{R}^3\backslash\overline{\Gamma}_1))^3$ and $H\in (H_{loc}^1(\mathbb{R}^3\backslash\overline{\Gamma}_1))^3$ to the Maxwell system (cf \cite{Ned}). Next, we shall make use of the integral equation method to prove the lemma. To that end, we let
\[
\Phi(x,y):=\frac{1}{4\pi}\frac{e^{i\omega|x-y|}}{|x-y|},\quad x, y\in\mathbb{R}^3, \ x\neq y,
\]
which is the fundamental solution to the differential operator $-\Delta-\omega^2$. Let ${R}\in\mathbb{R}_+$ be sufficiently large such that $\Gamma_1\cup\Gamma_2\Subset B_R$.

By the Stratton-Chu formula \cite{CK}, we have
%\begin{align}
%E(x)=& E^i(x)+\nabla_x\wedge\int_{\partial\Gamma_1}\nu(y)\wedge E(y)\Phi(x,y)\ ds_y \nonumber\\
%&-\frac{1}{i\omega}\nabla_x\wedge\nabla_x\wedge\int_{\partial\Gamma_1}\nu(y)\wedge H(y)\Phi(x,y)\ ds_y\nonumber\\
%&+\nabla_x\wedge\int_{\Gamma_2} (\nabla_y \wedge E(y)-i\omega H(y))\Phi(x,y)\ dy\nonumber\\
%&-\nabla_x\int_{\Gamma_2} \nabla_y\cdot E(y)\Phi(x,y)\ d y\nonumber\\
%&+i\omega\int_{\Gamma_2} (\nabla_y \wedge H(y)+i\omega E(y))\Phi(x,y)\ dy,\quad x\in B_R\backslash\overline{\Gamma}_1,\label{eq:fe}\\
%H(x)=& H^i(x)+\nabla_x\wedge\int_{\partial\Gamma_1} \nu(y)\wedge H(y)\Phi(x,y)\ ds_y\nonumber\\
%&+\frac{1}{i\omega}\nabla_x\wedge\nabla_x\wedge\int_{\partial\Gamma_1} \nu(y)\wedge E(y) \Phi(x,y)\ ds_y\nonumber\\
%&+\nabla_x\wedge\int_{\Gamma_2}(\nabla_y\wedge H(y)+i\omega E(y))\Phi(x,y)\ dy\nonumber\\
%&-\nabla_x\int_{\Gamma_2}\nabla_y\cdot H(y)\Phi(x,y)\ dy\nonumber\\
%&-i\omega\int_{\Gamma_2}(\nabla_y\wedge E(y)-i\omega H(y))\Phi(x,y)\ dy,\quad x\in B_R\backslash\overline{\Gamma}_1.\label{eq:fh}
%\end{align}
\begin{align}
E(x)=& E^i(x)+\nabla_x\wedge\int_{\partial\Gamma_1}\nu(y)\wedge E(y)\Phi(x,y)\ ds_y \nonumber\\
&-\frac{1}{i\omega}\nabla_x\wedge\nabla_x\wedge\int_{\partial\Gamma_1}\nu(y)\wedge H(y)\Phi(x,y)\ ds_y\nonumber\\
&+\nabla_x\wedge\int_{\Gamma_2} (\nabla_y \wedge E(y)-i\omega H(y))\Phi(x,y)\ dy\nonumber\\
&-\nabla_x\int_{\Gamma_2} \nabla_y\cdot E(y)\Phi(x,y)\ d y\nonumber\\
&+i\omega\int_{\Gamma_2} (\nabla_y \wedge H(y)+i\omega E(y))\Phi(x,y)\ dy,\quad x\in B_R\backslash\overline{\Gamma}_1,\label{eq:fe}
\end{align}
and
\begin{align}
H(x)=& H^i(x)+\nabla_x\wedge\int_{\partial\Gamma_1} \nu(y)\wedge H(y)\Phi(x,y)\ ds_y\nonumber\\
&+\frac{1}{i\omega}\nabla_x\wedge\nabla_x\wedge\int_{\partial\Gamma_1} \nu(y)\wedge E(y) \Phi(x,y)\ ds_y\nonumber\\
&+\nabla_x\wedge\int_{\Gamma_2}(\nabla_y\wedge H(y)+i\omega E(y))\Phi(x,y)\ dy\nonumber\\
&-\nabla_x\int_{\Gamma_2}\nabla_y\cdot H(y)\Phi(x,y)\ dy\nonumber\\
&-i\omega\int_{\Gamma_2}(\nabla_y\wedge E(y)-i\omega H(y))\Phi(x,y)\ dy,\quad x\in B_R\backslash\overline{\Gamma}_1.\label{eq:fh}
\end{align}
By using the Maxwell equations \eqref{eq:Maxwell general}, and \eqref{eq:fe}--\eqref{eq:fh}, together with the use of the mapping properties of the layer potential operators (cf. \cite{CK,Ned}), we have the following system of integral equations
\begin{align}
E(x)=& E^i(x)-\frac{1}{i\omega}\nabla_x\wedge\nabla_x\wedge\int_{\partial\Gamma_1}\nu(y)\wedge H(y)\Phi(x,y)\ ds_y\nonumber\\
&+\nabla_x\wedge\int_{\Gamma_2}i\omega(\mu_2-1) H(y)\Phi(x,y)\ dy\nonumber\\
&+\int_{\Gamma_2}i\omega\left(1-\varepsilon_2-i\frac{\sigma_2}{\omega}\right) E(y)\Phi(x,y)\ dy,\qquad x\in\Gamma_2,\label{eq:syst1}\\
H(x)=& H^i(x)+\nabla_x\wedge\int_{\partial\Gamma_1}\nu(y)\wedge H(y)\Phi(x,y)\ ds_y\nonumber\\
&+\nabla_x\wedge\int_{\Gamma_2}i\omega\left(1-\varepsilon_2-i\frac{\sigma_2}{\omega}\right) E(y)\Phi(x,y)\ dy\nonumber\\
&-\int_{\Gamma_2}i\omega(\mu_2-1) H(y)\Phi(x,y)\ dy,\qquad x\in\Gamma_2,\label{eq:syst2}\\
\nu(x)\wedge H(x)=&2\nu(x)\wedge H^i(x)+2\nu(x)\wedge\nabla_x\wedge\int_{\partial\Gamma_1}\nu(y)\wedge H(y)\Phi(x,y)\ ds_y\nonumber\\
&+\nu(x)\wedge\nabla_x\wedge\int_{\Gamma_2}2i\omega\left(1-\varepsilon_2-i\frac{\sigma_2}{\omega}\right)E(y)\Phi(x,y)\ dy\nonumber\\
&-\nu(x)\wedge\int_{\Gamma_2} 2i\omega(\mu_2-1) H(y)\Phi(x,y)\ dy,\qquad x\in\partial\Gamma_1.\label{eq:syst3}
\end{align}
Next, we introduce the following volume integral operators
\begin{equation}\label{eq:integral operators}
\begin{split}
(\mathcal{L} E)(x)=& \int_{\Gamma_2}i\omega\left(1-\varepsilon_2-i\frac{\sigma_2}{\omega}\right) E(y)\Phi(x,y)\ dy, \quad x\in\Gamma_2,\\
(\mathcal{L}' E)(x)=& \nabla_x\wedge (\mathcal{L} E)(x),\quad x\in \Gamma_2,\\
(\mathcal{K} H)(x)=& \int_{\Gamma_2}i\omega(\mu_2-1) H(y)\Phi(x,y)\ dy,\quad x\in\Gamma_2,\\
(\mathcal{K}' H)(x)=& \nabla_x\wedge (\mathcal{K} H)(x),\quad x\in\Gamma_2,
\end{split}
\end{equation}
and the boundary integral operator
\begin{equation}\label{eq:integral operators 2}
(\mathcal{P} (\nu\wedge H))(x)= 2\nu(x)\wedge\nabla_x\wedge\int_{\partial\Gamma_1} \nu(y)\wedge H(y)\Phi(x,y)\ ds_y,\quad x\in\partial\Gamma_1.
\end{equation}
Two important function spaces are recalled as follows:
\begin{equation}\label{eq:function space}
\begin{split}
TH^{-1/2}(\partial\Gamma_1):=& \left\{\mathbf{a}\in (H^{-1/2}(\partial \Gamma_1))^3; \nu\wedge\mathbf{a}=0\quad\mbox{for a.e. $x\in\partial\Gamma_1$}\right\},\\
TH_{\text{Div}}^{-1/2}(\partial\Gamma_1):=& \left\{\mathbf{a}\in TH^{-1/2}(\partial \Gamma_1); \text{Div}(\mathbf{a})\in TH^{-1/2}(\partial \Gamma_1)\right\}.
\end{split}
\end{equation}
Furthermore, we let
\[
\begin{split}
\mathbf{b}_1(x):=&-\frac{1}{i\omega}\nabla_x\wedge\nabla_x\wedge\int_{\partial\Gamma_1}\nu(y)\wedge H(y)\Phi(x,y)\ ds_y,\quad x\in\Gamma_2,\\
\mathbf{b}_2(x):=& \nabla_x\wedge\int_{\partial\Gamma_1}\nu(y)\wedge H(y)\Phi(x,y)\ ds_y,\quad x\in\Gamma_2,\\
\mathbf{b}_3(x):=& \nu(x)\wedge\nabla_x\wedge\int_{\Gamma_2}2i\omega\left(1-\varepsilon_2-i\frac{\sigma_2}{\omega}\right)E(y)\Phi(x,y)\ dy\\
&-\nu(x)\wedge\int_{\Gamma_2} 2i\omega(\mu_2-1) H(y)\Phi(x,y)\ dy,\quad x\in\Gamma_1.
\end{split}
\]
Since $L=\mathrm{dist}(\Gamma_1,\Gamma_2)\gg 1$, one readily verifies that
\begin{equation}\label{eq:estt1}
\|\mathbf{b}_l\|_{L^2(\Gamma_2)^3}\leq \frac{C}{L}\|\nu\wedge H\|_{TH^{-1/2}_{\text{Div}}(\partial \Gamma_1)}, l=1,2,
\end{equation}
and
\begin{equation}\label{eq:estt2}
 \|\mathbf{b}_3\|_{TH^{-1/2}_{\text{Div}}(\Gamma_1)}\leq \frac{C}{L}\left(\|E\|_{L^2(\Gamma_2)^3}+\|H\|_{L^2(\Gamma_2)^3} \right ),
\end{equation}
where $C$ is a positive constant depending only on $\Gamma_1, \Gamma_2$ and $\omega$.

Next, by using the integral operators introduced in \eqref{eq:integral operators} and \eqref{eq:integral operators 2}, the integral equations \eqref{eq:syst1}--\eqref{eq:syst2} can be formulated as
\begin{equation}\label{eq:ii1}
\mathbf{A}:=\begin{pmatrix}
I-\mathcal{L} & \mathcal{K}' \\
-\mathcal{L}' &  I+\mathcal{K}
\end{pmatrix},\qquad \mathbf{A}\begin{pmatrix}
E(x)\\ H(x)
\end{pmatrix}-\begin{pmatrix}
\mathbf{b}_1(x)\\
\mathbf{b}_2(x)
\end{pmatrix}=\begin{pmatrix}
E^i(x)\\
H^i(x)
\end{pmatrix},\qquad x\in\Gamma_2,
\end{equation}
In a similar manner, \eqref{eq:syst3} can be formulated as
\begin{equation}\label{eq:ii2}
(I-\mathcal{P})(\nu\wedge H)(x)-\mathbf{b}_3(x)=2\nu(x)\wedge H^i(x),\quad x\in\Gamma_1.
\end{equation}
Referring to \cite{CK,Ned}, we know that both $\mathbf{A}: L^2(\Gamma_2)^3\wedge L^2(\Gamma_2)^3\rightarrow L^2(\Gamma_2)^3\wedge L^2(\Gamma_2)^3 $ and $I-\mathcal{P}: TH_{\text{Div}}^{-1/2}(\partial\Gamma_1)\rightarrow TH_{\text{Div}}^{-1/2}(\partial\Gamma_1)$ are Fredholm operators of index 0, and thus they are invertible. Using this fact and \eqref{eq:estt1}--\eqref{eq:estt2}, one obtains  from \eqref{eq:ii1} that
\begin{equation}\label{eq:soso1}
\begin{pmatrix}
E(x)\\ H(x)
\end{pmatrix}=\mathbf{A}^{-1}\begin{pmatrix}
E^i(x)\\ H^i(x)
\end{pmatrix}+\mathcal{O}(L^{-1}):=\begin{pmatrix}
\widetilde E(x)\\ \widetilde H(x)
\end{pmatrix}+\mathcal{O}(L^{-1}),\quad x\in\Gamma_2
\end{equation}
and
\begin{equation}\label{eq:soso2}
(\nu\wedge H)(x)=(I-\mathcal{P})^{-1}(2\nu\wedge H^i)(x)+\mathcal{O}(L^{-1}):=(\nu\wedge\widehat{H})(x)+\mathcal{O}(L^{-1}),\quad x\in\partial\Gamma_1,
\end{equation}
where $(\widetilde E, \widetilde H)$ are actually the EM fields corresponding to $(\Gamma_2;\varepsilon_2,\mu_2,\sigma_2)$, and $\widehat H$ is the magnetic field corresponding to $\Gamma_1$. Finally, by using \eqref{eq:soso1} and \eqref{eq:soso2}, and the integral representation \eqref{eq:syst1}, one readily has \eqref{eq:addition}.

The proof is complete.
\end{proof}

\begin{rem}\label{rem:eigenvalue}
If $\omega$ is an interior eigenvalue for $\Gamma_1$, one can overcome the problem by using the combined electric and magnetic dipole operators technique, and we refer to \cite[Chap. 6]{CK} for more details.

\end{rem}

\begin{lem}\label{lem:small expansion}
Let $(\Omega_1;\varepsilon_1,\mu_1,\sigma_1)$ be one component of $\Omega$ described in Section~\ref{sect:small} . Then we have
\begin{equation}\label{eq:small expansion 1}
\begin{split}
\!\!\!\!\!A(\theta;\Omega_1)=&e^{i\omega(\theta'-\theta)\cdot z_1}A(\theta; \rho D_1)\\
=& e^{i\omega(\theta'-\theta)\cdot z_1}\left[(\omega\rho)^3\left( \sum_{m=-1,0,1} a_{1,m} U_1^m(\theta)+ b_{1,m} V_1^m(\theta) \right)+\mathcal{O}((\omega\rho)^4)\right ],
\end{split}
\end{equation}
where $U_1^m$ and $V_1^m$ are the vectorial spherical harmonics introduced in \eqref{eq:orthonormal system}, and $a_{1,m}$ and $b_{1,m}$  $(m=-1,0,1)$, are constants depending only on $(D_1;\varepsilon_1, \mu_1, \sigma_1)$ and $p, \theta'$, but independent of $\omega\rho$.
\end{lem}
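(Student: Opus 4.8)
The plan is to establish Lemma~\ref{lem:small expansion} in two stages: first the translation relation $A(\theta;\Omega_1)=e^{i\omega(\theta'-\theta)\cdot z_1}A(\theta;\rho D_1)$, and then the low-frequency asymptotic expansion of $A(\theta;\rho D_1)$ in terms of the vectorial spherical harmonics $U_1^m,V_1^m$.

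\medskip

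For the translation identity, I would recall that $\Omega_1=z_1+\rho D_1$ and use the behavior of the Maxwell system under the change of variables $x\mapsto x-z_1$. If $(E,H)$ solves the scattering problem for $\Omega_1$ with incident wave $E^i(x)=pe^{i\omega x\cdot\theta'}$, then $(\tilde E(x),\tilde H(x)):=(E(x+z_1),H(x+z_1))$ solves the scattering problem for $\rho D_1$ but with incident wave $\tilde E^i(x)=pe^{i\omega(x+z_1)\cdot\theta'}=e^{i\omega z_1\cdot\theta'}pe^{i\omega x\cdot\theta'}$, i.e. the incident wave is merely multiplied by the constant phase factor $e^{i\omega z_1\cdot\theta'}$. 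By linearity the scattered field for $\rho D_1$ with the original incident wave, call its far-field pattern $A(\theta;\rho D_1)$, is related by $E^+_{\Omega_1}(x)=e^{i\omega z_1\cdot\theta'}E^+_{\rho D_1}(x-z_1)$. Then I would plug $x-z_1$ into the far-field asymptotics \eqref{eq:farfield}: writing $|x-z_1|=|x|-\hat x\cdot z_1+\mathcal{O}(|x|^{-1})$ and $(x-z_1)/|x-z_1|=\hat x+\mathcal{O}(|x|^{-1})$, the factor $e^{i\omega|x-z_1|}/|x-z_1|$ produces $e^{i\omega|x|}/|x|$ times $e^{-i\omega\hat x\cdot z_1}$, and combined with the $e^{i\omega z_1\cdot\theta'}$ from the incident-wave rescaling this gives exactly the phase $e^{i\omega(\theta'-\theta)\cdot z_1}$ with $\theta=\hat x$. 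This is standard bookkeeping with the Stratton--Chu representation and should be routine.

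\medskip

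For the asymptotic expansion, I would invoke the low-frequency (Rayleigh) scattering asymptotics for a small scatterer of the form $\rho D_1$: as $\omega\rho\to 0$, the leading contribution to the scattered electric field comes from an induced electric dipole and an induced magnetic dipole, so the far-field pattern has leading order $(\omega\rho)^3$ and is, to that order, a tangential vector field whose components are spanned by the degree-one vectorial spherical harmonics $U_1^m,V_1^m$, $m=-1,0,1$ (these span exactly the space of tangential fields on $\mathbb{S}^2$ arising from the far-fields of electric and magnetic dipoles). Concretely, one expands $E$, $H$ and the layer densities in the integral-equation formulation (as in the proof of Lemma~\ref{lem:multiple scatterers}, or as in the polarizability-tensor treatments of \cite{AN,AVV,APRT,DK,Mar}) in powers of $\omega\rho$, identifies the leading dipole terms, and then re-expresses the resulting dipole far-field patterns in the $\{U_1^m,V_1^m\}$ basis rather than in terms of polarizability tensors contracted with $p$ and $\theta'$. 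The coefficients $a_{1,m},b_{1,m}$ are then linear functionals of $p$ and $\theta'$ determined by the (scaled, hence $\rho$-independent) polarizability tensors of $(D_1;\varepsilon_1,\mu_1,\sigma_1)$; crucially, because $\rho D_1$ is a rescaling of the fixed domain $D_1$, the polarizability tensors of $\rho D_1$ scale as $\rho^3$ times those of $D_1$, which is what makes $a_{1,m},b_{1,m}$ independent of $\omega\rho$ and extracts the clean $(\omega\rho)^3$ prefactor. The remainder is controlled by the next order in the Neumann/Born-type series, giving the $\mathcal{O}((\omega\rho)^4)$ error.

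\medskip

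The main obstacle I anticipate is making the change of basis from the polarizability-tensor representation to the vectorial-spherical-harmonic representation fully rigorous and uniform: one must verify that the degree-one vectorial spherical harmonics $U_1^m,V_1^m$ indeed span precisely the span of all electric-dipole and magnetic-dipole far-field patterns (this is a classical but slightly delicate identity, since $U_1^m$ corresponds to the transverse-magnetic/electric-dipole radiation and $V_1^m$ to the transverse-electric/magnetic-dipole radiation), and that the error term in the low-frequency expansion is genuinely $\mathcal{O}((\omega\rho)^4)$ in the $T^2(\mathbb{S}^2)$ norm with constant depending only on $D_1$ and its contents — this requires the uniform invertibility of the relevant integral operators on the rescaled domain, which follows from the analysis already used in Lemma~\ref{lem:multiple scatterers}. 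The handling of the two cases $\sigma_1<+\infty$ (medium) and $\sigma_1=+\infty$ (PEC obstacle) is parallel: in both cases the leading term is a combination of an electric and a magnetic dipole, only the polarizability tensors differ.
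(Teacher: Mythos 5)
Your proposal is correct and follows essentially the same route as the paper: the translation identity via the phase shift of the incident wave and the far-field asymptotics, followed by a rescaling to a low-frequency scattering problem for the fixed domain $D_1$ at effective frequency $\omega\rho$, the dipole (Rayleigh) expansion from \cite{DK} giving the $(\omega\rho)^3$ leading term with an $\mathcal{O}((\omega\rho)^4)$ remainder, and finally the observation that the electric- and magnetic-dipole far-field patterns $\theta\wedge(\theta\wedge\mathbf{a})-\theta\wedge\mathbf{b}$ lie in the span of the degree-one harmonics $U_1^m,V_1^m$. The paper implements the reduction by the explicit change of variables $\widetilde{E}(x)=E_\rho(\rho x)$ rather than through the $\rho^3$ scaling of polarizability tensors, and treats the PEC and medium cases by the same scaling argument, exactly as you anticipate.
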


\begin{proof}
We first consider the case that $\sigma_1=+\infty$, namely, $\Omega_1$ is a perfectly conducting obstacle. It is directly verified that
\begin{equation}\label{eq:shift}
A(\theta;\Omega_1)=A(\theta;z_1+\rho D_1)=e^{i\omega(\theta'-\theta)\cdot z_1} A(\theta; \rho D_1).
\end{equation}
The EM scattering corresponding to the obstacle $\rho D_1$ is described by
\begin{equation}\label{eq:Maxwell small PEC}
\begin{cases}
\displaystyle{\nabla\wedge E_\rho-i\omega H_\rho=0}\ & \hspace*{-1cm} \mbox{in\ \ $\mathbb{R}^3\backslash\overline{\rho D_1}$},\\
\displaystyle{\nabla\wedge H_\rho+i\omega E_\rho=0}\ & \hspace*{-1cm} \mbox{in\ \ $\mathbb{R}^3\backslash\overline{\rho D_1}$},\\
%E_\rho^-=E_\rho|_{\rho D_1},\
E_\rho^+=(E_\rho-E^i)|_{\mathbb{R}^3\backslash\overline{\rho D_1}},\\
\nu\wedge E_\rho^+=-\nu\wedge E^i & \hspace*{-1cm} \mbox{on \ \ $\partial (\rho D_1)$},\\
\displaystyle{\lim_{|x|\rightarrow+\infty}|x|\left| (\nabla\wedge E_\rho^+)(x)\wedge\frac{x}{|x|}- i\omega E_\rho^+(x) \right|=0}.
\end{cases}
\end{equation}
Set
\begin{equation}\label{eq:scaling s}
\widetilde{E}(x):=E_\rho(\rho x),\quad \widetilde{H}(x):=H_\rho(\rho x),\quad \widetilde{E}^i(x)=E^i(\rho x)\quad\mbox{for\ \ $x\in\mathbb{R}^3\backslash\overline{D}_1$}.
\end{equation}
It is verified directly that
\begin{equation}\label{eq:Maxwell low frequency}
\begin{cases}
\displaystyle{\nabla\wedge \widetilde{E}-i\omega\rho \widetilde{H}=0}\ & \hspace*{-2cm} \mbox{in\ \ $\mathbb{R}^3\backslash\overline{D_1}$},\\
\displaystyle{\nabla\wedge \widetilde{H}+i\omega\rho \widetilde{E}=0}\ & \hspace*{-2cm} \mbox{in\ \ $\mathbb{R}^3\backslash\overline{D_1}$},\\
%\widetilde{E}^-=\widetilde{E}|_{Q},\
\widetilde{E}^+=(\widetilde{E}-\widetilde{E}^i)|_{\mathbb{R}^3\backslash\overline{D_1}},\\
\nu\wedge \widetilde{E}^+=-\nu\wedge \widetilde{E}^i & \hspace*{-2cm} \mbox{on \ \ $\partial D_1$},\\
\displaystyle{\lim_{|x|\rightarrow+\infty}|x|\left| (\nabla\wedge \widetilde{E}^+)(x)\wedge\frac{x}{|x|}- i\omega \widetilde{E}^+(x) \right|=0}.
\end{cases}
\end{equation}
Then by the low frequency asymptotics in \cite{DK}, one has
\begin{equation}\label{eq:lowf}
\widetilde{E}^+(x)=\frac{e^{i\omega\rho|x|}}{i\omega\rho|x|}\widetilde{A}(\theta)+\mathcal{O}\left(\frac{1}{|x|^2}\right),
\end{equation}
with
\begin{equation}\label{eq:low frequency 2}
\widetilde{A}(\theta)=\frac{(i\omega\rho)^3}{4\pi}[\theta\wedge(\theta\wedge\mathbf{a})-\theta\wedge\mathbf{b}]+\mathcal{O}((\omega\rho)^4),
\end{equation}
where $\mathbf{a}$ and $\mathbf{b}$ are two constant vectors, representing the electric and magnetic dipole moments, and they depend only on $p$, $\theta'$ 
and $D_1$, but independent of $\omega$ and $\rho$.
By \eqref{eq:scaling s}, one can readily see that
\begin{equation}\label{eq:low frequency 3}
A(\theta)=\frac{1}{i\omega}\widetilde{A}(\theta)=\frac{1}{i\omega}\frac{(i\omega\rho)^3}{4\pi}[\theta\wedge(\theta\wedge\mathbf{a})-\theta\wedge\mathbf{b}]+\mathcal{O}((\omega\rho)^4).
\end{equation}
Finally, by using the fact that $U_n^m$ and $V_n^m$ form a complete orthonormal system in $T^2(\mathbb{S}^2)$, it is straightforward to show that there exist $a_{1,m}$ and $b_{1,m}$, $m=-1,0,1$ such that
\[
\theta\wedge(\theta\wedge\mathbf{a})-\theta\wedge\mathbf{b}= \sum_{m=-1,0,1} a_{1,m} U_1^m(\theta)+ b_{1,m} V_1^m(\theta),
\]
which together with \eqref{eq:low frequency 2} and \eqref{eq:shift} implies \eqref{eq:small expansion 1}.

For the case when the underlying small scatterer is an inhomogeneous medium, by using a completely same scaling argument, together with the corresponding low frequency EM asymptotics in \cite{DK}, one can prove \eqref{eq:small expansion 1}.

The proof is completed.
\end{proof}

\begin{rem}
The low frequency EM asymptotics and the asymptotic expansions of EM fields due a small inclusion could also be found in \cite{AN,AVV,APRT,Mar}, where polarizability tensors are always involved. For the present study, we need asymptotic expansions in terms of the vectorial spherical harmonics.
\end{rem}

After technical preparations above, we are now in a position to show the proof of Theorem~\ref{thm:main1}.

\begin{proof}[Proof of Theorem~\ref{thm:main1}]
First, by Lemmas~\ref{lem:multiple scatterers} and \ref{lem:small expansion}, we have
\begin{equation}\label{eq:pa1}
\begin{split}
&A(\theta;\Omega)=\sum_{j=1}^l A(\theta; \Omega_l)+\mathcal{O}\left(\frac 1 L\right)\\
=& \sum_{j=1}^l e^{i\omega(\theta'-\theta)\cdot z_j}\left[(\omega\rho)^3 A^j(\theta)+\mathcal{O}((\omega\rho)^4)\right ]+\mathcal{O}\left(\frac 1 L\right)\\
=& \sum_{j=1}^l e^{i\omega(\theta'-\theta)\cdot z_j}\left[(\omega\rho)^3\left( \sum_{m=-1,0,1} a_{1,m}^{(j)} U_1^m(\theta)+ b_{1,m}^{(j)} V_1^m(\theta) \right)+\mathcal{O}((\omega\rho)^4)\right ]\\
&\qquad +\mathcal{O}\left(\frac 1 L\right)
\end{split}
\end{equation}
where $a_{1,m}^{(j)}$ and $b_{1,m}^{(j)}$, $m=-1,0,1$, are constants dependent only on $(D_j;\varepsilon_j, \mu_j, \sigma_j)$ and $p, \theta'$. In \eqref{eq:pa1}, we have introduced
\[
A^j(\theta):=\sum_{m=-1,0,1} \left( a_{1,m}^{(j)} U_1^m(\theta)+ b_{1,m}^{(j)} V_1^m(\theta) \right).
\]

Next, without loss of generality, we only consider the indicating behaviors of $I_s(z)$ in a small open neighborhood of $z_1$, i.e., $z\in neigh(z_1)$. Clearly, we have
\begin{equation}
\omega|z_j-z|\geq \omega L\gg 1\quad \mbox{for $z\in neigh(z_1)$ and $j=2,3,\ldots, l$}.
\end{equation}
Hence, by the Riemann-Lebesgue lemma and \eqref{eq:pa1}, we have
\begin{equation}\label{eq:ss2}
\begin{split}
&\langle A(\theta;\Omega), e^{i\omega (\theta'-\theta)\cdot z} U_1^m(\theta)  \rangle_{T^2(\mathbb{S}^2)} \\
=&(\omega\rho)^3 \langle e^{i\omega(\theta'-\theta)\cdot z_1}A^1(\theta), e^{i\omega(\theta'-\theta)\cdot z} U_1^m(\theta)\rangle_{T^2(\mathbb{S}^2)}+\mathcal{O}\left((\omega\rho)^3(\frac 1 L+\rho)\right)\\
\leq & (\omega\rho)^3\left( a_{1,m}^{(1)}+\mathcal{O}\left(\frac 1 L+\rho\right) \right ),\quad m=-1,0,1.
\end{split}
\end{equation}
In \eqref{eq:ss2}, we have employed the orthogonality of vector spherical harmonics and also the Cauchy-Schwartz inequality. Moreover, by the Cauchy-Schwartz inequality, we know the strict inequality would hold in the last inequality of \eqref{eq:ss2} if $z\neq z_1$, and only when $z=z_1$, the equality would hold. In a completely similar manner, one can show that
\begin{equation}\label{eq:ss3}
\begin{split}
&\langle A(\theta;\Omega), e^{i\omega (\theta'-\theta)\cdot z} V_1^m(\theta)  \rangle_{T^2(\mathbb{S}^2)} \\
=&(\omega\rho)^3 \langle e^{i\omega(\theta'-\theta)\cdot z_1}A^1(\theta), e^{i\omega(\theta'-\theta)\cdot z} V_1^m(\theta)\rangle_{T^2(\mathbb{S}^2)}+\mathcal{O}\left((\omega\rho)^3(\frac 1 L+\rho)\right)\\
\leq & (\omega\rho)^3\left( b_{1,m}^{(1)}+\mathcal{O}\left(\frac 1 L+\rho\right) \right ),\quad m=-1,0,1,
\end{split}
\end{equation}
where strict inequality would hold for the last relation if $z\neq z_1$, and only when $z=z_1$, the equality would hold.

Hence, by \eqref{eq:ss2} and \eqref{eq:ss3}, we have
\begin{equation}\label{eq:ss4}
\begin{split}
&\sum_{m=-1,0,1} \left(  {\bigg|\left\langle A(\theta;\Omega), e^{i\omega (\theta'-\theta)\cdot z}\, U_1^m(\theta)  \right\rangle_{T^2(\mathbb{S}^2)}\bigg|^2} \right.  \\
&\qquad\qquad\qquad +\left. {\left|\left\langle A(\theta;\Omega), e^{i\omega(\theta'-\theta)\cdot z}\, V_1^m(\theta)  \right\rangle_{T^2(\mathbb{S}^2)}\right|^2} \right)  \\
&\leq (\omega\rho)^6\left( \sum_{m=-1,0,1} |a_{1,m}^{(1)}|^2+|b_{1,m}^{(1)}|^2+ \mathcal{O}\left(\frac 1 L+\rho\right)  \right ),
\end{split}
\end{equation}
where the equality would hold only when $z=z_1$. On the other hand, by using \eqref{eq:pa1}, it is straightforward to show that
\begin{equation}\label{eq:ss5}
\|A(\theta;\Omega)\|^2_{T^2(\mathbb{S}^2)}=(\omega\rho)^6\sum_{j=1}^l\left( \sum_{m=-1,0,1} |a_{1,m}^{(j)}|^2+|b_{1,m}^{(j)}|^2+ \mathcal{O}\left(\frac 1 L+\rho\right)  \right ).
\end{equation}
By \eqref{eq:ss4} and \eqref{eq:ss5}, we see that for $z\in neigh(z_1)$,
\begin{equation}\label{eq:ss6}
\begin{split}
I_s(z)=\frac{1}{\|A(\theta;\Omega)\|^2_{T^2(\mathbb{S}^2)}}\sum_{m=-1,0,1}\bigg( & {\bigg|\left\langle A(\theta;\Omega), e^{i\omega (\theta'-\theta)\cdot z}\, U_1^m(\theta)  \right\rangle_{T^2(\mathbb{S}^2)}\bigg|^2}\\
& +{\bigg|\left\langle A(\theta;\Omega), e^{i\omega(\theta'-\theta)\cdot z}\, V_1^m(\theta)  \right\rangle_{T^2(\mathbb{S}^2)}\bigg|^2}         \bigg)\\
\leq \frac{\sum\limits_{m=-1}^1 |a_{1,m}^{(1)}|^2+|b_{1,m}^{(1)}|^2}{\sum\limits_{j=1}^l \sum\limits_{m=-1}^1 |a_{1,m}^{(j)}|^2+|b_{1,m}^{(j)}|^2}+&\mathcal{O}\left(\frac 1 L+\rho\right),
\end{split}
\end{equation}
where only when $z=z_1$, the equality would hold in the last relation.  Set
\[
K_0^1:=\frac{\sum\limits_{m=-1}^1 |a_{1,m}^{(1)}|^2+|b_{1,m}^{(1)}|^2}{\sum\limits_{j=1}^l \sum\limits_{m=-1}^1 |a_{1,m}^{(j)}|^2+|b_{1,m}^{(j)}|^2}.
\]
Using \eqref{eq:small expansion 1} and \eqref{eq:ss5}, it is readily seen that
\[
K^1:=\frac{\|A(\theta;\Omega_1)\|^2_{T^2(\mathbb{S}^2)}}{\|A(\theta;\Omega)\|^2_{T^2(\mathbb{S}^2)}}=K_0^1+\mathcal{O}\left(\frac 1 L+\rho\right).
\]

Thus the proof is completed.
\end{proof}

Next, we show the proof of Theorem~\ref{thm:main2}.

\begin{proof}[Proof of Theorem~\ref{thm:main2}]~First, by Lemma~\ref{lem:multiple scatterers}, we have
\begin{equation}\label{eq:m21}
\begin{split}
A(\theta;\Omega)& =A(\theta; \bigcup_{j=1}^l Q_j)=\sum_{j=1}^l A(\theta; Q_j)+\mathcal{O}\left(\frac 1 L\right )\\
=&\sum_{j=1}^l A(\theta;\bigcup_{j=1}^l z_j+M_j)+\mathcal{O}\left(\frac 1 L\right )\\
=&\sum_{j=1}^l e^{i\omega(\theta'-\theta)\cdot z_j} A(\theta; M_j)+\mathcal{O}\left(\frac 1 L\right ).
\end{split}
\end{equation}
Let us consider the indicator function $I_r^1(z)$ in \eqref{eq:indicator regular}. Without loss of generality, we assume that $M_1=\Sigma_1$ and $M_j\neq\Sigma_1$ for $j=2,\ldots,l$. Let $z\in neigh (z_1)$. By \eqref{eq:spare regular}, \eqref{eq:m21} and the Riemannian-Lebesgue lemma, we have
\begin{equation}\label{eq:m22}
\begin{split}
&\left|\langle A(\theta; \Omega), e^{i\omega(\theta'-\theta)\cdot z}A(\theta;\Sigma_1) \rangle_{T^2(\mathbb{S}^2)}\right|\\
=&\left|\langle e^{i\omega(\theta'-\theta)\cdot z_1}A(\theta; \Sigma_1), e^{i\omega(\theta'-\theta)\cdot z}A(\theta;\Sigma_1) \rangle_{T^2(\mathbb{S}^2)}\right|+\mathcal{O}\left(\frac 1 L\right )\\
\leq & \|A(\theta;\Sigma_1)\|_{T^2(\mathbb{S}^2)}^2+\mathcal{O}\left(\frac 1 L\right ),
\end{split}
\end{equation}
where in the last relation, the equality would hold only when $z=z_1$. Hence, for a sufficiently small neighborhood, $neigh(z_1)$,
\[
|I_r(z)-1|\leq \mathcal{O}\left(\frac 1 L\right )\quad \mbox{for\ \ $z\in neigh(z_1)$}.
\]
Next, we let $z\in neigh(z_2)$, with $neigh(z_2)$ sufficiently small. Then, again by \eqref{eq:spare regular}, \eqref{eq:m21} and the Riemannian-Lebesgue lemma, we have
\begin{equation}\label{eq:m23}
\begin{split}
&\left|\langle A(\theta; \Omega), e^{i\omega(\theta'-\theta)\cdot z_2}A(\theta;\Sigma_1) \rangle_{T^2(\mathbb{S}^2)}\right|\\
=&\left|\langle e^{i\omega(\theta'-\theta)\cdot z_2}A(\theta; M_2), e^{i\omega(\theta'-\theta)\cdot z_2}A(\theta;\Sigma_1) \rangle_{T^2(\mathbb{S}^2)}\right|+\mathcal{O}\left(\frac 1 L\right )\\
< & \|A(\theta;M_2)\|_{T^2(\mathbb{S}^2)} \|A(\theta;\Sigma_1)\|_{T^2(\mathbb{S}^2)}+\mathcal{O}\left(\frac 1 L\right ),
\end{split}
\end{equation}
where in the last relation, strict inequality holds in light of the assumption \eqref{eq:generic assumption} and the fact $M_2\neq \Sigma_1$. Hence, by \eqref{eq:m23} and \eqref{eq:assumption2}, it is readily shown that
\begin{equation}\label{eq:m33}
I_r(z_2)<\frac{\|A(\theta; M_2)\|_{T^2(\mathbb{S}^2)}}{\|A(\theta;\Sigma_1)\|_{T^2(\mathbb{S}^2)}}\leq 1+\mathcal{O}\left(\frac 1 L\right ).
\end{equation}
Hence, there  exists some $\epsilon_0\in\mathbb{R}_+$ such that
\[
\widetilde{I}_r^1(z)=|I_r^1(z)-1|\geq \epsilon_0+\mathcal{O}\left( \frac 1 L \right )\quad\mbox{for\ $z\in neigh(z_2)$}.
\]
In a completely similar manner, one can show the indicating behaviors of $\widetilde{I}_r^1(z)$ for $z\in neigh(z_j)$, $j=3,\ldots,l$.

The proof is completed.
\end{proof}

\begin{rem}\label{rem:extension S}
Through our proof of Theorem~\ref{thm:main2}, one can see that our locating \textbf{Scheme R} could be extended to a more general setting by including inhomogeneous medium components as follows. Suppose in $\mathscr{S}$, some reference scatterer, say, $(\Sigma_j; \varepsilon_j, \mu_j, \sigma_j)$, $1\leq j\leq l'\leq l$, are inhomogeneous media, with the EM parameters $\varepsilon_j, \mu_j$ and $\sigma_j$ known as well. Let $Q$ be a scatterer with multiple components, such that each component is a translation of some reference scatterer. Then our locating \textbf{Scheme R}  could be extended to this more general setting provided the reference scatterer space $\mathscr{S}$ satisfies the generic assumption \eqref{eq:generic assumption}. However, to our best knowledge, there is no such uniqueness result in the literature. On the other hand, as we remarked earlier that $\mathscr{S}$ is given in advance, one coud verify \eqref{eq:generic assumption} in advance, and if it is satisfied, our locating scheme could apply.
\end{rem}

\section{Numerical experiments  and discussions \label{sec:numerics}}

In this section, we have carried out a series of numerical experiments for different benchmark problems to test the performance
of our proposed locating \textbf{Schemes S} and \textbf{R}. The results achieved are
consistent with our theoretical predictions in Sections~\ref{sect:2} and \ref{sect:3} in a sound manner.
Besides, the numerical results reveal some very promising features of  the imaging schemes that were not covered in our theoretical analysis.

%In this section, we will carry out a systematic evaluation of the
%performance of the single shot methods using the far-field
%measurement data for some inverse electromagnetic scattering
%benchmark problems in three dimensions, including small-volumed and
%regular-sized scatterers. Although we adopt inverse electromagnetic
%medium scattering examples in the following, our theoretical
%prediction also works fine for obstacle scatterers.

We first briefly describe our experimental settings. Let $e_1=(1,\,0,\,0)^{T}$, $e_2=(0,\,1,\,0)^{T}$ and
$e_3=(0,\,0,\,1)^{T}$ be the three canonical Cartesian bases. The single detecting/incident  wave we shall employ for our numerical examples is the
plane wave specified by \eqref{eq:plane waves} with the polarization $p=e_3$ and impinging direction $\theta'=e_1$. Moreover, We shall
take the unitary wavelength $\lambda=1$, namely the frequency
$\omega=2\pi$.
%When it
%is not mentioned, only one single excitation field is employed with
%its polarization $p=e_3$ incident direction $d=e_1$ in the examples
%below; Otherwise three excitation fields with
%$(p_1,d_{1})=(e_3,e_1)$, $(p_2,d_{2})=(e_1,e_2)$ and
%$(p_3,d_{3})=(e_2,e_3)$ are used and mentioned explicitly.
In all
the examples, the electric far-field pattern $A$ is observed at 590
Lebedev quadrature points distributed on the unit sphere
$\mathbb{S}^{2}$ (cf.~\cite{Leb99} and references therein). The exact far-field data
$A(\theta)$ are corrupted point-wise by the formula
\begin{equation}
A_{\delta}(\theta)=A(\theta)+\delta\zeta_1\underset{\theta}{\max}|A(\theta)|\exp(i2\pi
\zeta_2)\,,
\end{equation}
where $\delta$ refers to the relative noise level, and both  $\zeta_1$ and $\zeta_2$ follow the
uniform distribution ranging from $-1$ to $1$. The scattered
electromagnetic fields are synthesized using the quadratic edge 
element discretization in the spherical domain centered at the origin with radius $4\lambda$ enclosed by a
spherical PML layer of width $\lambda$ to damp the reflection. Local adaptive
refinement techniques within the inhomogeneous scatterer are adopted to
enhance the resolution of the scattered field. The far-field data
are approximated by the integral equation representation \cite[p.~181,
Theorem~3.1]{PiS02} on the sphere centered at the origin with radius
$3.5$ using the numerical quadrature. We refine the mesh successively
till the relative maximum error of successive groups of far-field
data is below $0.1\%$. The far-field patterns on the finest mesh
are used as the exact data. The values of the indicator functions have  been
normalized between $0$ and $1$ to highlight the positions
identified. The sampling domain is fixed to be
$\mathcal{T}=[-2\lambda,\,2\lambda]^{3}$, which is then divided into small
cubes of equal width $h=0.01\lambda$ yielding the sampling mesh $\mathcal{T}_h$. The
orthogonal  slices of the contours of the
indicator function values will be displayed as an estimate to the
profiles of the unknown scatterers.  In the sequel, for brevity, we shall refer to
the newly proposed single-shot locating  methods of \textbf{Schemes S} and \textbf{R} as the {\bf SSM(s)} and
{\bf SSM(r)}, respectively.

The scatterers under concern include a cube, a ball
of different radii, and revolving solids from a kite  and a peanut  parameterized in the $x-y$ plane as follows \cite{LLZ09}:
\begin{eqnarray*}
% \nonumber to remove numbering (before each equation)
  (x(t),y(t),0) &:=& (\cos t +0.65 \cos 2t - 0.65, 1.5\sin t, 0), \quad 0 \le t \le 2\pi\,,  \\
  (x(t),y(t),0) &:=& \sqrt{3\cos^2 t +1}(\cos t, \sin t,0), \quad 0 \le t \le 2\pi\,.
\end{eqnarray*}

In the following, three groups of experiments shall be conducted. The first group of experiments is on locating  point-like small-sized scatterers in various scenarios by the {\bf SSM(s)}, and the second group of  experiments is on testing the {\bf SSM(r)} for locating regular-sized scatterers. In the third group of experiments, we shall test the performance of {\bf SSM(s)} on imaging `partially-small' line-segment-like scatterers.

\subsection{The SSM(s) for point-like scatterers}

%We first test the four examples  using  far-field data to verfiy the
%accuracy and robustness.

\smallskip{}

\textbf{Example 1.}~In this example, we
consider a cube scatterer of length $0.02\lambda$ located at the
origin, with the EM parameters given by $\varepsilon=4, \mu=1$ and $\sigma=0$. The orthogonal slices of the contours of the indicator
function $I_s(z)$ in \eqref{eq:indicator function} for the \textbf{SSM(s)} are given in
Fig.~\ref{fig:ex1}. It can be readily seen that the {\bf SSM(s)} can locate the small scatterer in a very accurate and stable manner. Indeed, even $20\%$ random noise is attached to the measurement data, the {\bf SSM(s)} still yields a very robust and accurate locating.

%With only a single incident wave probing, the
%location of the point scatterer is accurately positioned even if the
%data is severely distorted.
%%We observe the pattern of the indicator
%%functions using noisy far-field data does not degenerate up to
%%$20\%$ noise.
%The \textbf{SSM(s)}  can locate the position of the ``point''
%scatterer in an accurate and stable way. The presence of noise level
%up to $20\%$ in the measurement data has little effect on the
%indicator function.

\begin{figure}
\hfill{}\includegraphics[width=0.4\textwidth]{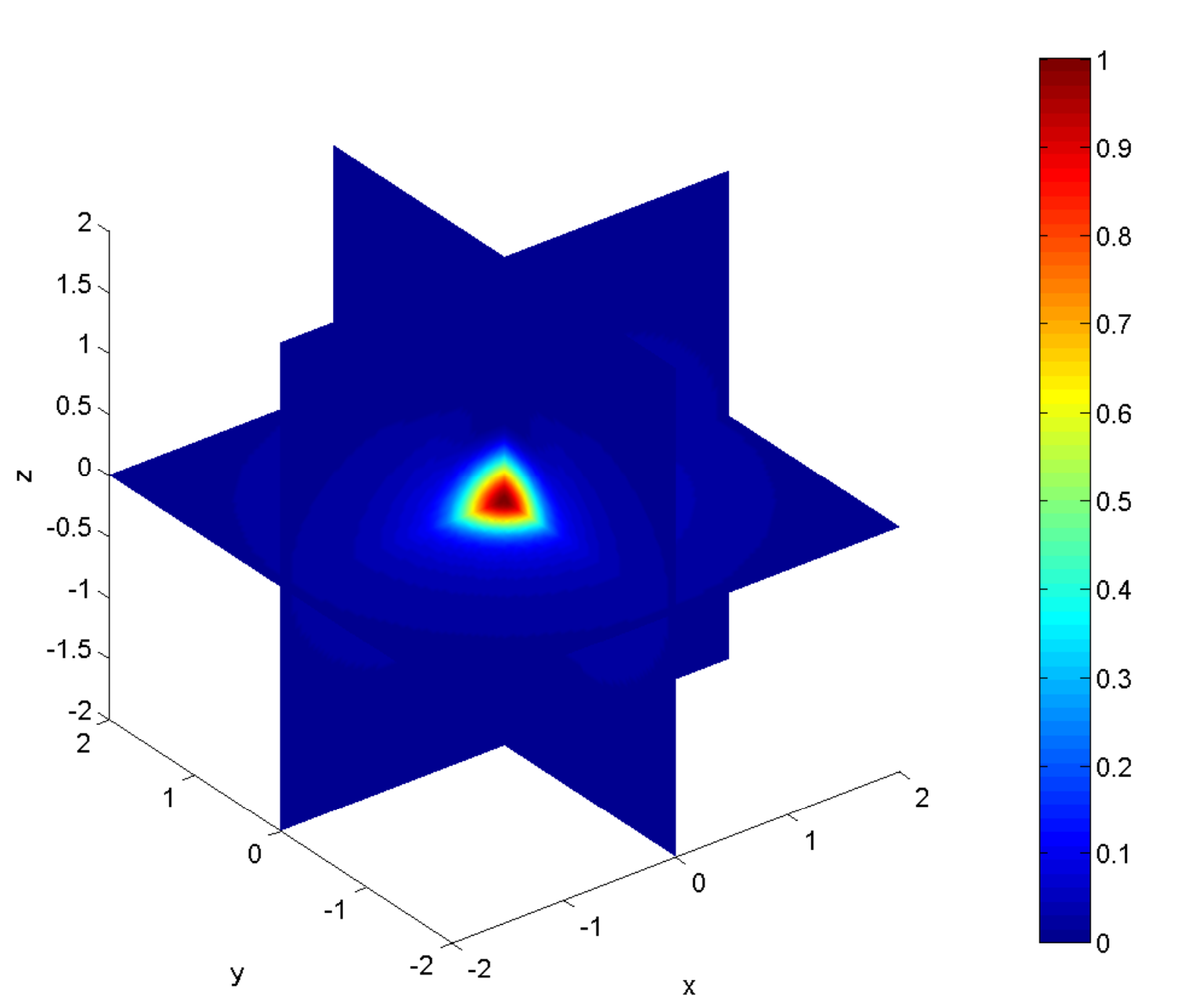}
\hfill{}\includegraphics[width=0.4\textwidth]{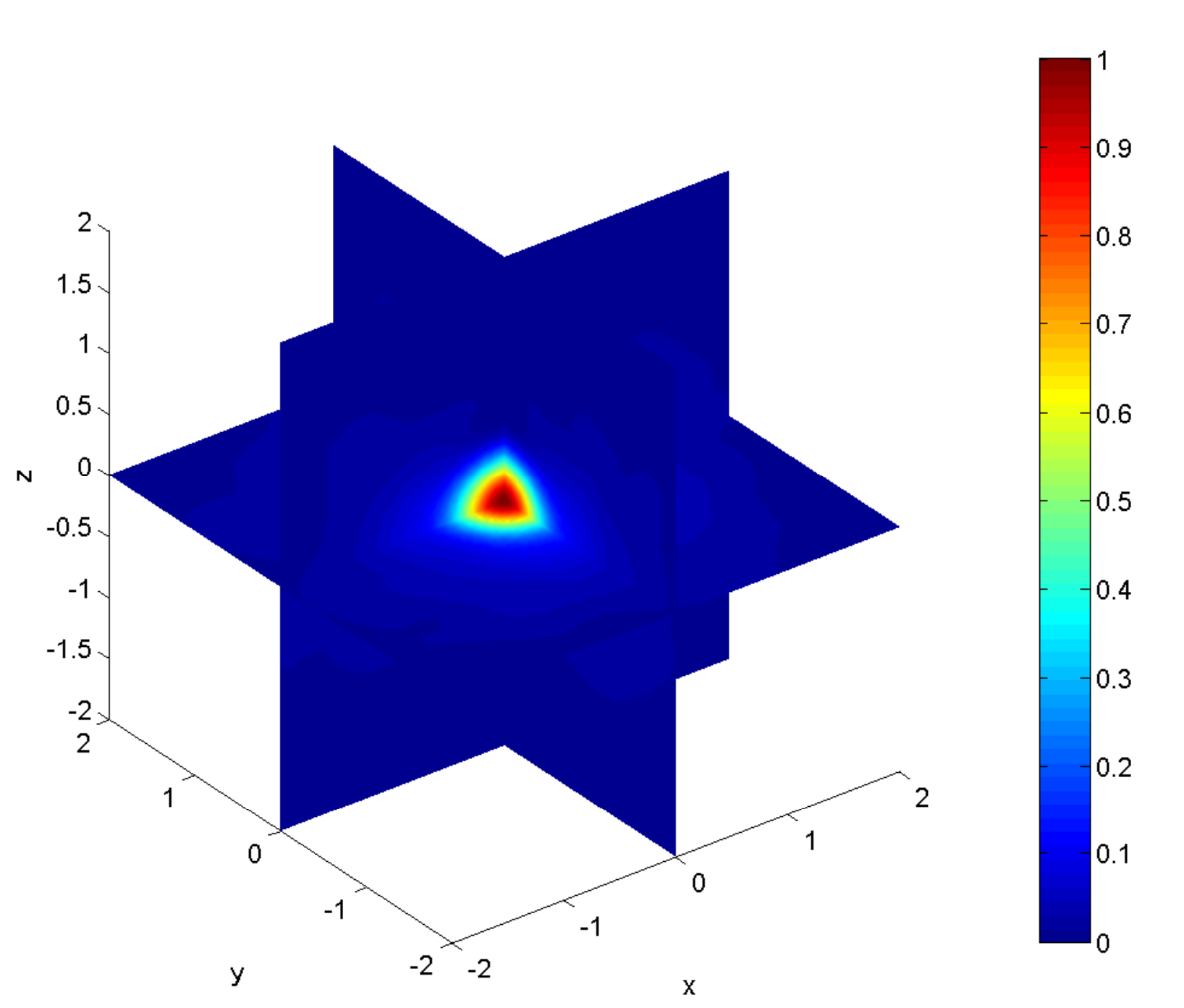}\hfill{}

\caption{\label{fig:ex1} Reconstruction results for Example 1:
(Left) Exact far-field data; (Right) Noisy far-field data with
$\delta=20\%$.}
\end{figure}

%\begin{figure}
%\hfill{}\hspace{-0.03\textwidth}%
%\begin{tabular}{c}
%\includegraphics[clip,width=0.32\textwidth]{exact1}\tabularnewline
%(a)\tabularnewline
%\end{tabular}\negthinspace{}%
%\begin{tabular}{cc}
%\includegraphics[width=0.32\textwidth]{ex1_near_G_4lambda} & \includegraphics[width=0.32\textwidth]{ex1_near_G_4lambda_noise}\tabularnewline
%(b) & (c)\tabularnewline
%\includegraphics[width=0.32\textwidth]{ex1_far_G} & \includegraphics[width=0.32\textwidth]{ex1_far_G_noise}\tabularnewline
%(d) & (e)\tabularnewline
%\end{tabular}\hfill{}
%
%\caption{\label{fig:ex1} Example 1: (a) true scatterer; Reconstruction results
%using (b) exact near-field data, (c) noisy near-field data with $\epsilon=20\%$,
%(d) exact far-field data, (e) noisy far-field data with $\epsilon=20\%$.}
%\end{figure}

\medskip

\noindent\textbf{Example 2.}~In this example, we consider a scatterer consisting of a ball medium component of radius $0.2\lambda$ positioned at $(1.5\lambda,\,1.5\lambda,\,0)$ and a ball PEC obstacle of radius $0.2\lambda$ positioned at
$(-1.5\lambda,\,-1.5\lambda,\,0)$. The EM parameters of the first scatterer 
component are taken to the same as the one in Example~1. For this example, the orthogonal slices of
the contours of the indicator function $I_s(z)$  for the \textbf{SSM(s)}  are shown in
Fig.~\ref{fig:ex2}. 
The \textbf{SSM(s)} yields a very accurate identification of the
location of both scatterers even if the measurement data is
significantly perturbed to a high-level noise. 
This example demonstrates that {\bf SSM(s)} can locate the multiple scatterer components without knowing the physical property of each component in advance. Moreover, we would like to note a promising feature of the {\bf SSM(s)}: if one chooses a cut-off value to be $0.7$ to separate the region where $I_s(z)$ is bigger than the cut-off value, then the rough profiles of the scatterer components would appear. Hence, in addition to find the locations of the scatterer components, our proposed scheme could also be able to qualitatively image the supports/shapes of the unknown scatterers.

%In this example, we verify the possibility to reconstruct the mixed type of scatterer components.
%A scatterer with one medium ball component of radius $0.2\lambda$ located at
%{\color{red} $(1.5\lambda,\,1.5\lambda,\,0)$ and a PEC spherical obstacle with radius $0.2\lambda$ located at
% $(1.5\lambda,\,1.5\lambda,\,0)$} are considered. The relative permittivity $\varepsilon_r$ of the scatterer
%is taken to be $4$.

%{\color{red} This example verifies that the \textbf{SSM(s)} can capture multiple
%scatterer components with different physical properties. The indicating slide plots of the
%\textbf{SSM(s)} using  far-field data are shown in
%Figure~\ref{fig:ex2}. The \textbf{SSM(s)} yields a very fine identification of the
%location of both scatterers even if the measurement data is
%significantly perturbed. Through studying the contrast of the
%indicator contours of medium scatterers with respect to the
%surrounding background medium, one can choose a certain cut-off
%value ranging around $0.7$ to truncate the profiles of the three
%scatterers. We find the indicator value of the medium scatterer is relatively smaller than that of
%the obstacle. It is also observed in other cases.}

\begin{figure}
\hfill{}\includegraphics[width=0.4\textwidth]{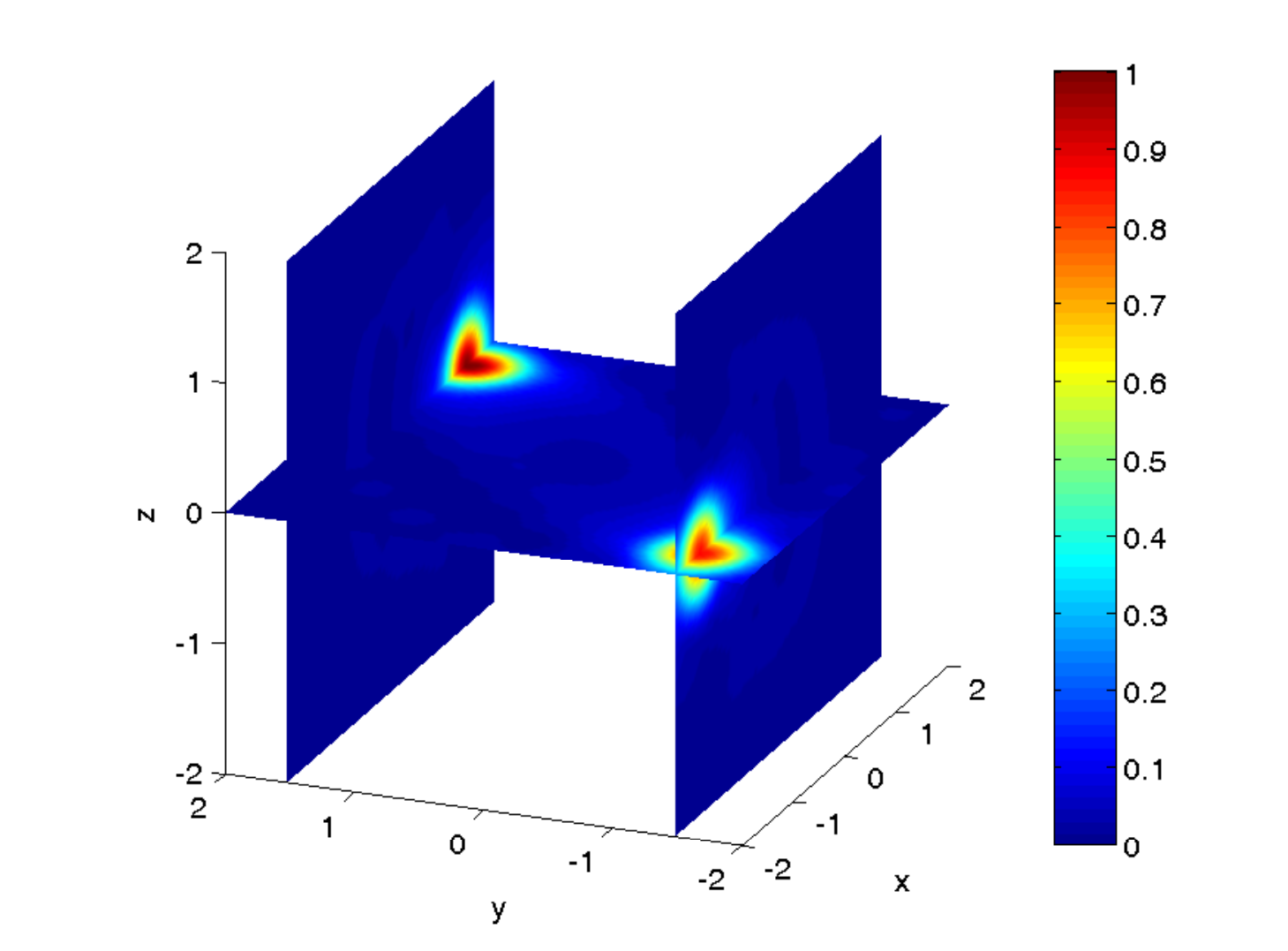}
\hfill{}\includegraphics[width=0.4\textwidth]{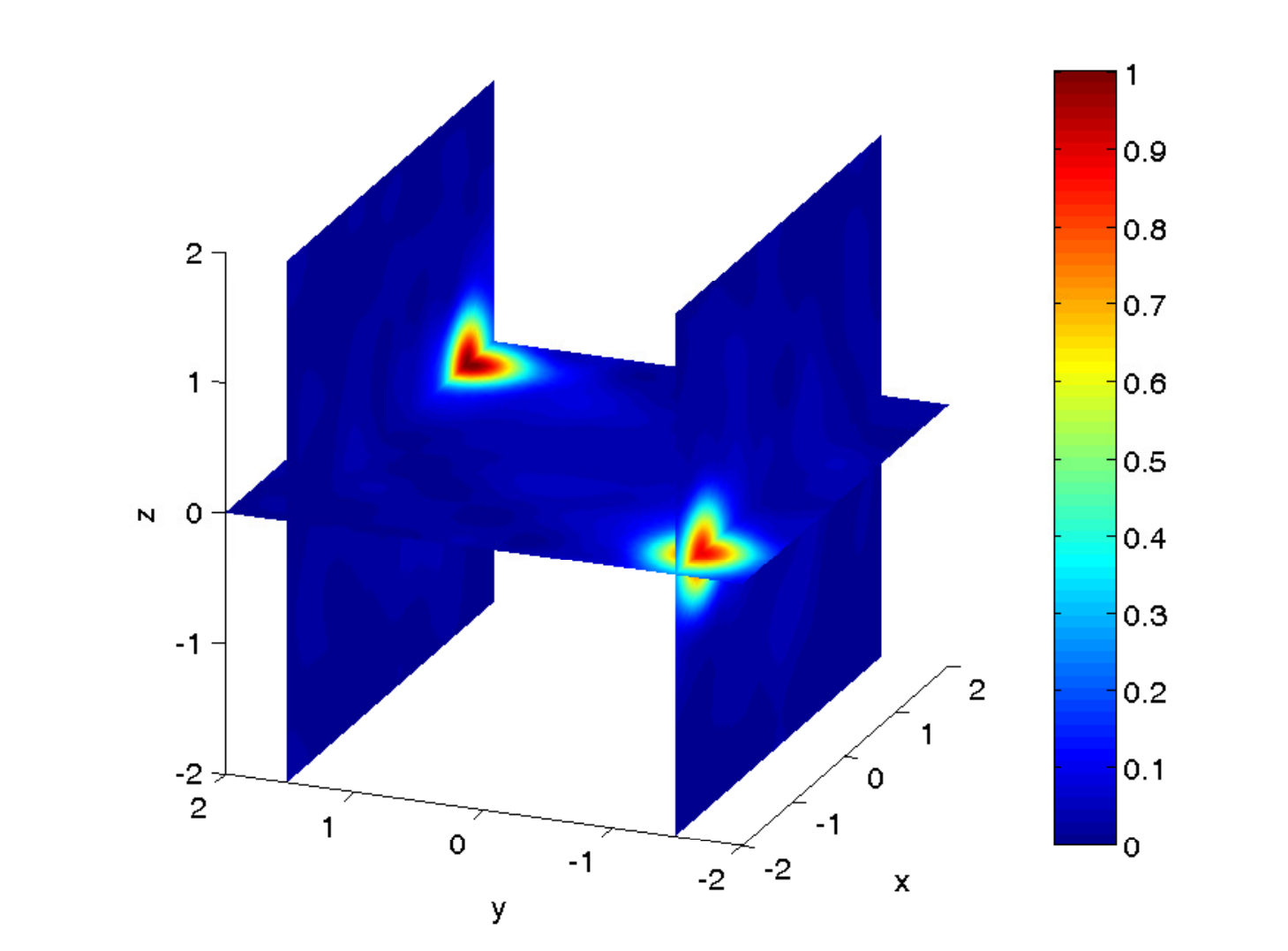}\hfill{}

\caption{\label{fig:ex2} Reconstruction results for Example 2:
(Left) Exact far-field data; (Right) Noisy far-field data with
$\delta=20\%$.}
\end{figure}

\medskip

\noindent\textbf{Example 3.}~In this example, three scatterers are presented at $(-\lambda,\,-\lambda,\,\lambda)$, $(\lambda,\,\lambda,\,0)$,
$(\lambda,\,\lambda,\,-\lambda)$, respectively. The first two scatterers are balls of radius $0.2\lambda$, with the EM parameters given by $\varepsilon_1=4+\sin x_1$, $\mu_1=1$, $\sigma_1=0$ and $\varepsilon_2=4+\cos x_2$, $\mu_2=1$, $\sigma_2=0$, respectively. Here, we have made use of $x=(x_1,x_2,x_3)$ to denote a point in $\mathbb{R}^3$. The third one is a non-convex kite-shaped revolving scatterer scaled by a relative size $\rho = 0.2$ with the EM parameters given by $\varepsilon_3=4+ x_3$, $\mu_3=1$ and $\sigma_3=0$. The numerical reconstruction results are shown in Fig.~\ref{fig:ex3}.
It can be seen from Fig.~\ref{fig:ex3} that  the {\bf SSM(s)} is capable of locating multiple scatterer components with variable contents. Moreover, it verifies that the small scatterers are not necessarily convex as assumed in our theoretical justification (cf. the remark below Eq. \eqref{eq:qualitative assumptions}).

%This example verifies that the \textbf{SSM(s)} can capture multiple
%scatterer components. The indicating slide plots of the
%\textbf{SSM(s)} using  far-field data are shown in
%Figure~\ref{fig:ex3}. The \textbf{SSM(s)} is able to identify the
%location of all scatterers even if the measurement data are
%significantly perturbed. Through studying the contrast of the
%indicator contours of medium scatterers with respect to the
%surrounding background medium, one can choose a certain cut-off
%value ranging around $0.7$ to truncate the profiles of the three
%scatterers.

\begin{figure}
\hfill{}\includegraphics[width=0.4\textwidth]{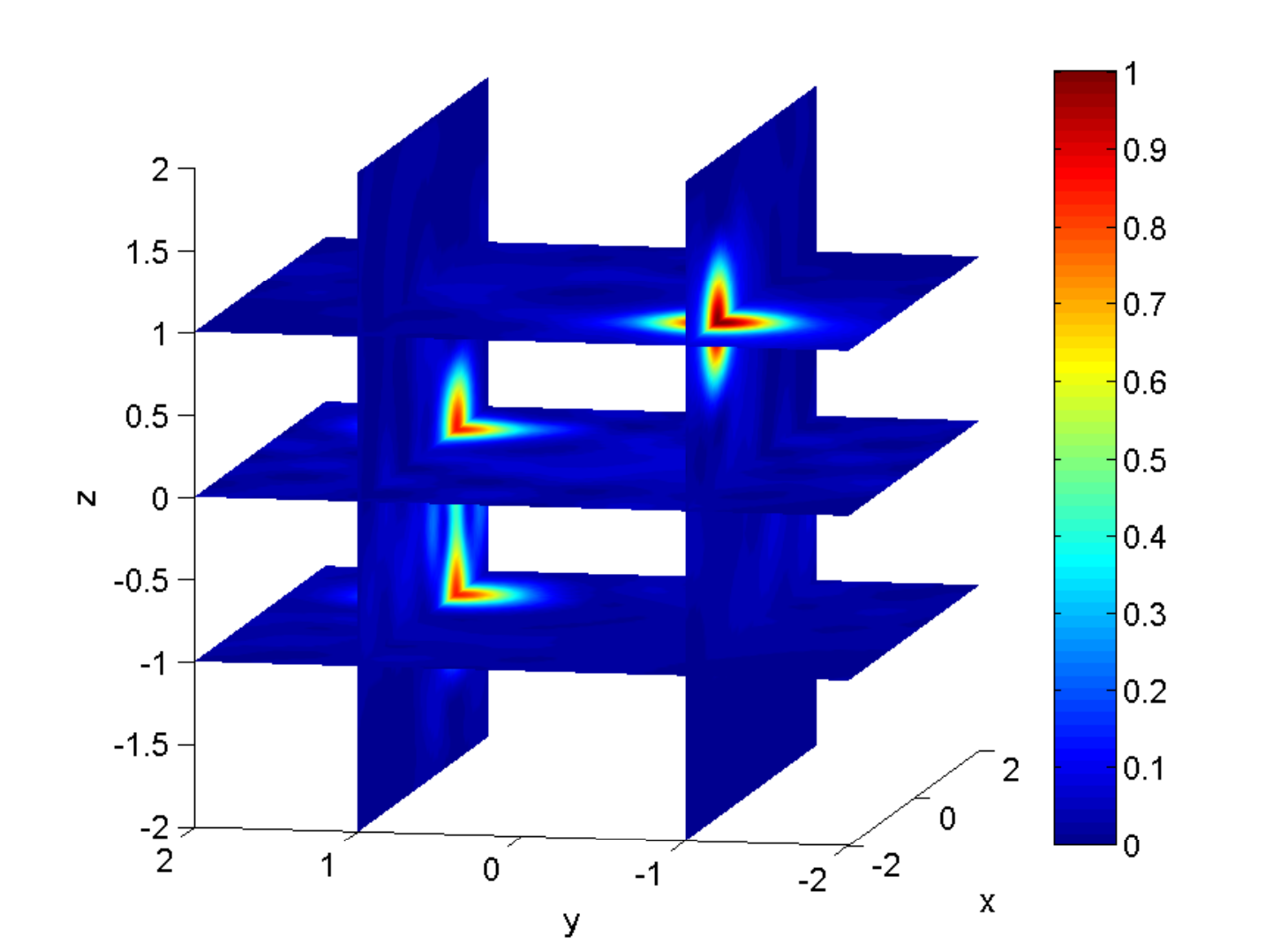}
\hfill{}\includegraphics[width=0.4\textwidth]{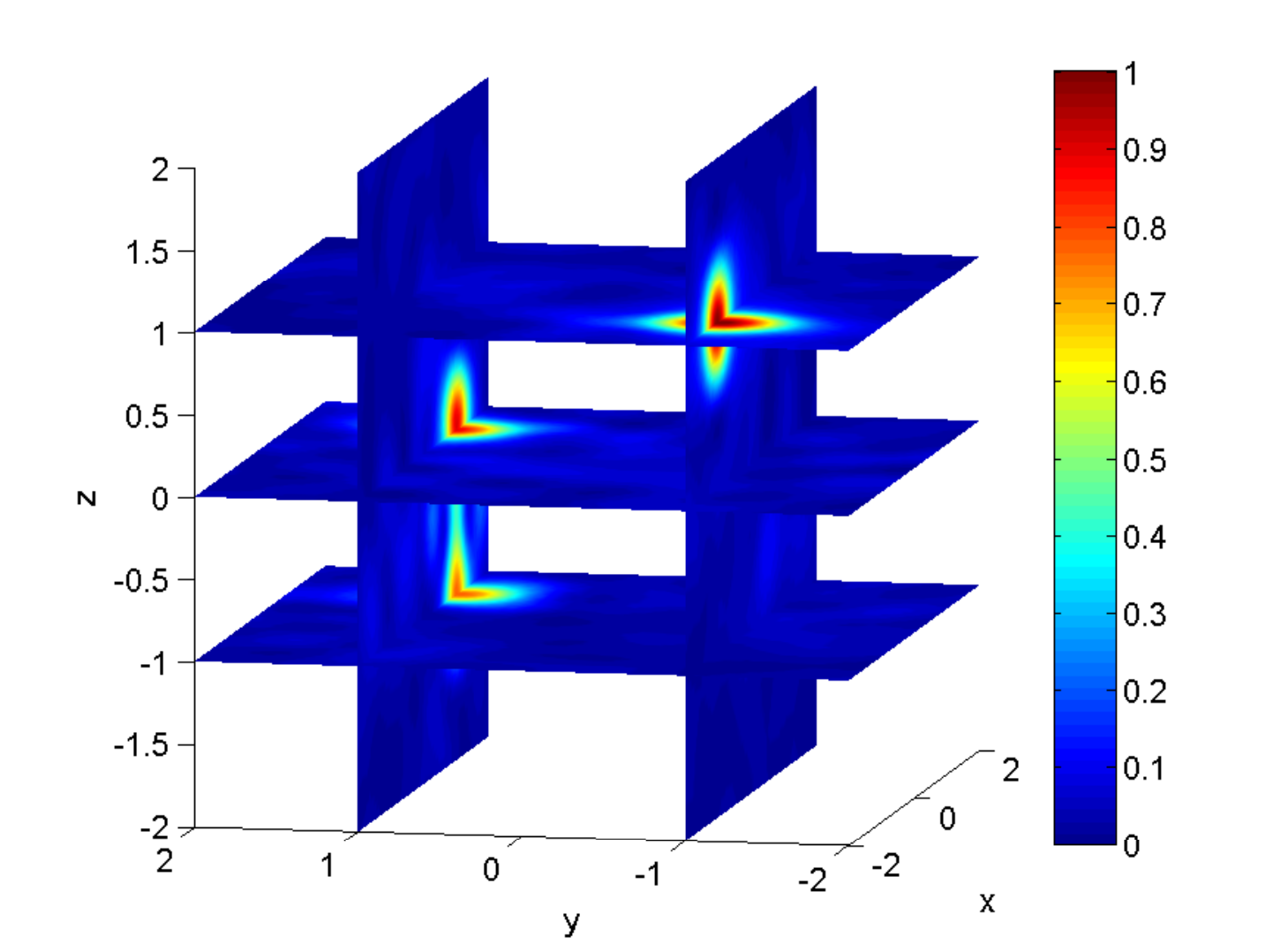}\hfill{}

\caption{\label{fig:ex3} Reconstruction results for Example 3:
(Left) Exact far-field data; (Right) Noisy far-field data with
$\delta=20\%$.}
\end{figure}

%\begin{figure}
%\hfill{}\hspace{-0.03\textwidth}%
%\begin{tabular}{c}
%\includegraphics[clip,width=0.32\textwidth]{exact2}\tabularnewline
%(a)\tabularnewline
%\end{tabular}\negthinspace{}%
%\begin{tabular}{cc}
%\includegraphics[width=0.32\textwidth]{ex3_near_G_4lambda} & \includegraphics[width=0.32\textwidth]{ex3_near_G_4lambda_noise}\tabularnewline
%(b) & (c)\tabularnewline
%\includegraphics[width=0.32\textwidth]{ex3_far_G} & \includegraphics[width=0.32\textwidth]{ex3_far_G_noise}\tabularnewline
%(d) & (e)\tabularnewline
%\end{tabular}\hfill{}
%
%\caption{\label{fig:ex2} Example 2: (a) true scatterer; reconstruction results
%using (b) exact near-field data, (c) noisy near-field data with $\epsilon=20\%$,
%(d) exact far-field data, (e) noisy far-field data with $\epsilon=20\%$.}
%\end{figure}

\medskip

\noindent\textbf{Example 4.}~We consider two ball
scatterers of radius $0.2\lambda$, located at
$(-0.45\lambda,\,0,\,0)$ and $(0.45\lambda,\,0,\,0)$, respectively, with the same EM parameters $\varepsilon=4, \mu=1$ and $\sigma=0$. We shall investigate the lower distance limit between the underlying separate scatterer components for the {\bf SSM(s)}. The results are
shown in Fig.~$\text{\,}$\ref{fig:ex4}. It can be seen in this case, namely the distance between the two components is of a half wavelength, the {\bf SSM(s)} can locate both scatterer components and separate them well. If we further reduce the distance between the two components (less than a half wavelength), the {\bf SSM(s)} can no longer separate the two scatterer components, though it can still roughly locate them.

\begin{figure}
\hfill{}\includegraphics[width=0.4\textwidth]{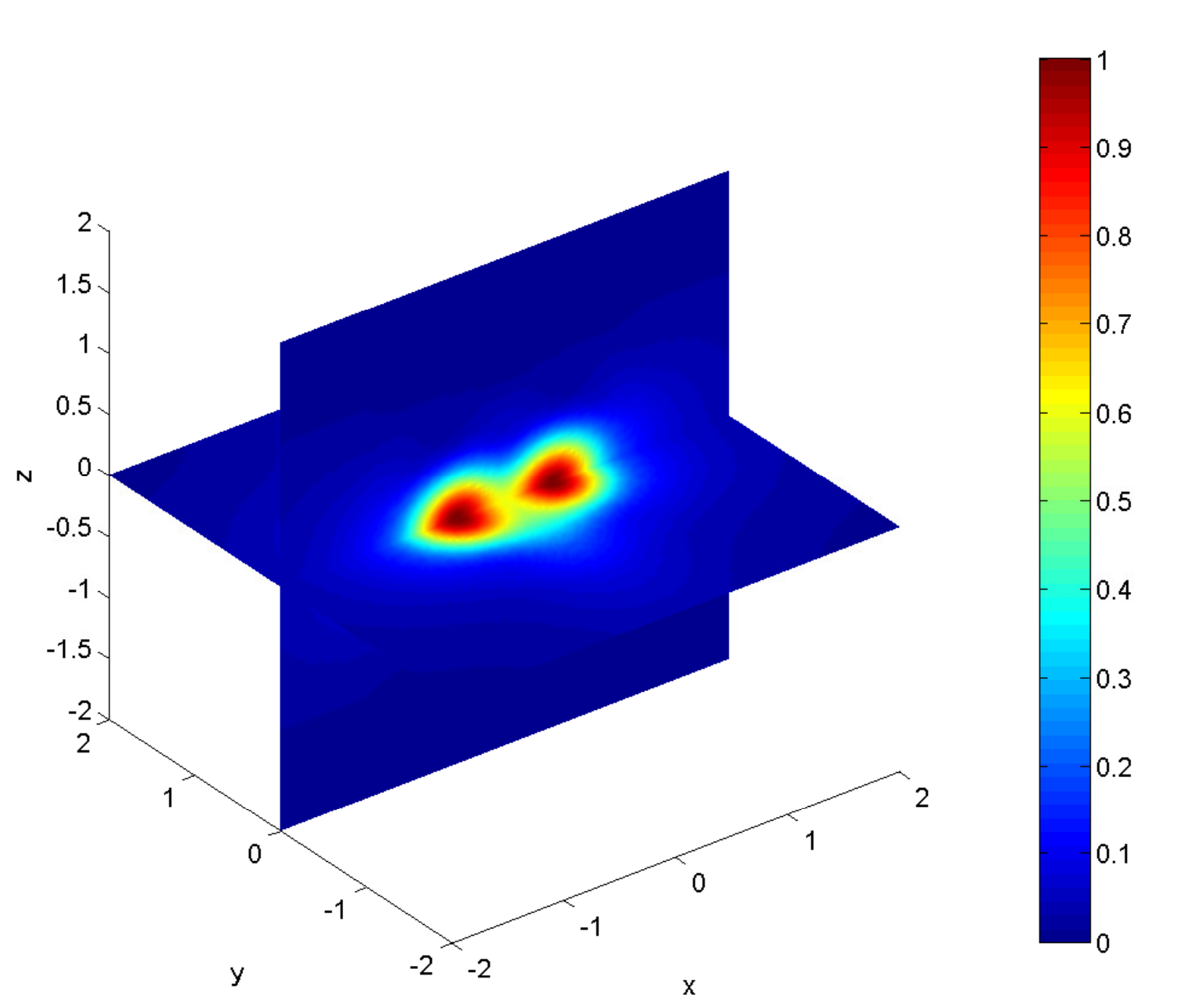}
\hfill{}\includegraphics[width=0.4\textwidth]{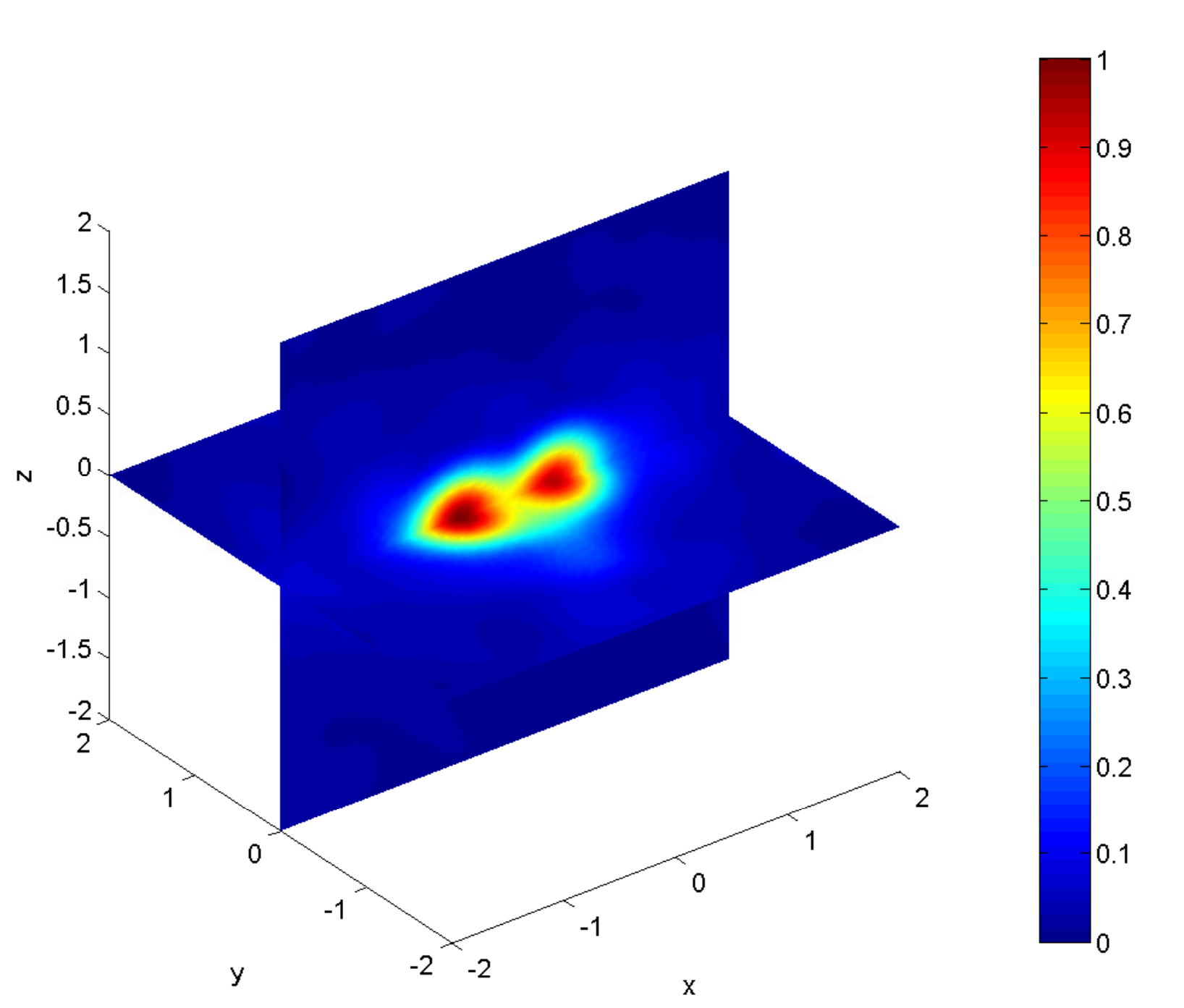}\hfill{}

\caption{\label{fig:ex4} Example 4:  Reconstruction results for Example 4:
(Left) Exact far-field data; (Right) Noisy far-field data with
$\delta=20\%$.}
\end{figure}

%\begin{figure}
%\hfill{}\hspace{-0.03\textwidth}%
%\begin{tabular}{c}
%\includegraphics[clip,width=0.32\textwidth]{exact3}\tabularnewline
%(a)\tabularnewline
%\end{tabular}\negthinspace{}%
%\begin{tabular}{cc}
%\includegraphics[width=0.32\textwidth]{ex4_near_G_4lambda} & \includegraphics[width=0.32\textwidth]{ex4_near_G_4lambda_noise}\tabularnewline
%(b) & (c)\tabularnewline
%\includegraphics[width=0.32\textwidth]{ex4_far_G} & \includegraphics[width=0.32\textwidth]{ex4_far_G_noise}\tabularnewline
%(d) & (e)\tabularnewline
%\end{tabular}\hfill{}
%
%\caption{\label{fig:ex3} Example 3: (a) true scatterer; reconstruction results
%using (b) exact near-field data, (c) noisy near-field data with $\epsilon=20\%$,
%(d) exact far-field data, (e) noisy far-field data with $\epsilon=20\%$.}
%\end{figure}

\smallskip{}

%From the above three examples, we see that the \textbf{SSM(s)} works very
%stably and can tolerate very severe noises.

%These estimated locations and
%index values can serve as the good initial guesses for further
%reconstructions of more accurate locations and inhomogeneity profile
%$\varepsilon(x)$ of scatterers through computationally much more
%demanding resolution processes such as nonlinear optimizations.

\smallskip

\subsection{SSM(r) for regular-sized scatterers}

In this subsection, we shall consider two examples to demonstrate the capability and effectiveness of the {\bf SSM(r)} for locating regular-sized scatterers.

\medskip{}

\noindent {\bf Example 5.}~In this example, we consider a revolving peanut and a revolving kite,
both revolving solids along the $x$-axis. The gravitational center of
the peanut is chosen at $(0,\,0,\,1.5\lambda)$, and the center of
the kite is anchored at $(0,\,0,\,-1.5\lambda)$. Both scatterers are taken to be PEC obstacles.

The admissible reference class is chosen to be composed of a unit ball, a revolving kite and a revolving peanut.  All
those admissible reference obstacles are centered at the origin and are PEC obstacles. Following the {\bf SSM(r)} algorithm, their norms of far-field data
associated with the reference obstacles in the admissible space are sorted in the descending order with the first one being the kite, the second one being the peanut and the third one being the unit ball. Using those a priori admissible known far-field data, we implement the
\textbf{SSM(r)}  method. The orthogonal contour slices with a certain transparency are shown in Fig.~\ref{fig:ex5} for better visualization. For both the noise-free and noisy far-field data, the {\bf SSM(r)} can successfully determine the location of the kite through
the first indicator function; see the dark red part in the center of the kite in the left figures. Once the kite is determined and its surrounding subregion trimmed from the sampling domain, we find that the second indicator function plot shows that the center of the peanut can be identified by continuing the {\bf SSM(r)}, see the right figures. After the peanut is determined and trimmed from the sampling domain, if one continues the {\bf SSM(r)} by testing data of the reference ball obstacle, no
significant peak value would be found for the reference unit ball obstacle. Our numerical results are nicely consistent with our theoretical predication.

\begin{figure}
\hfill{}\includegraphics[width=0.4\textwidth]{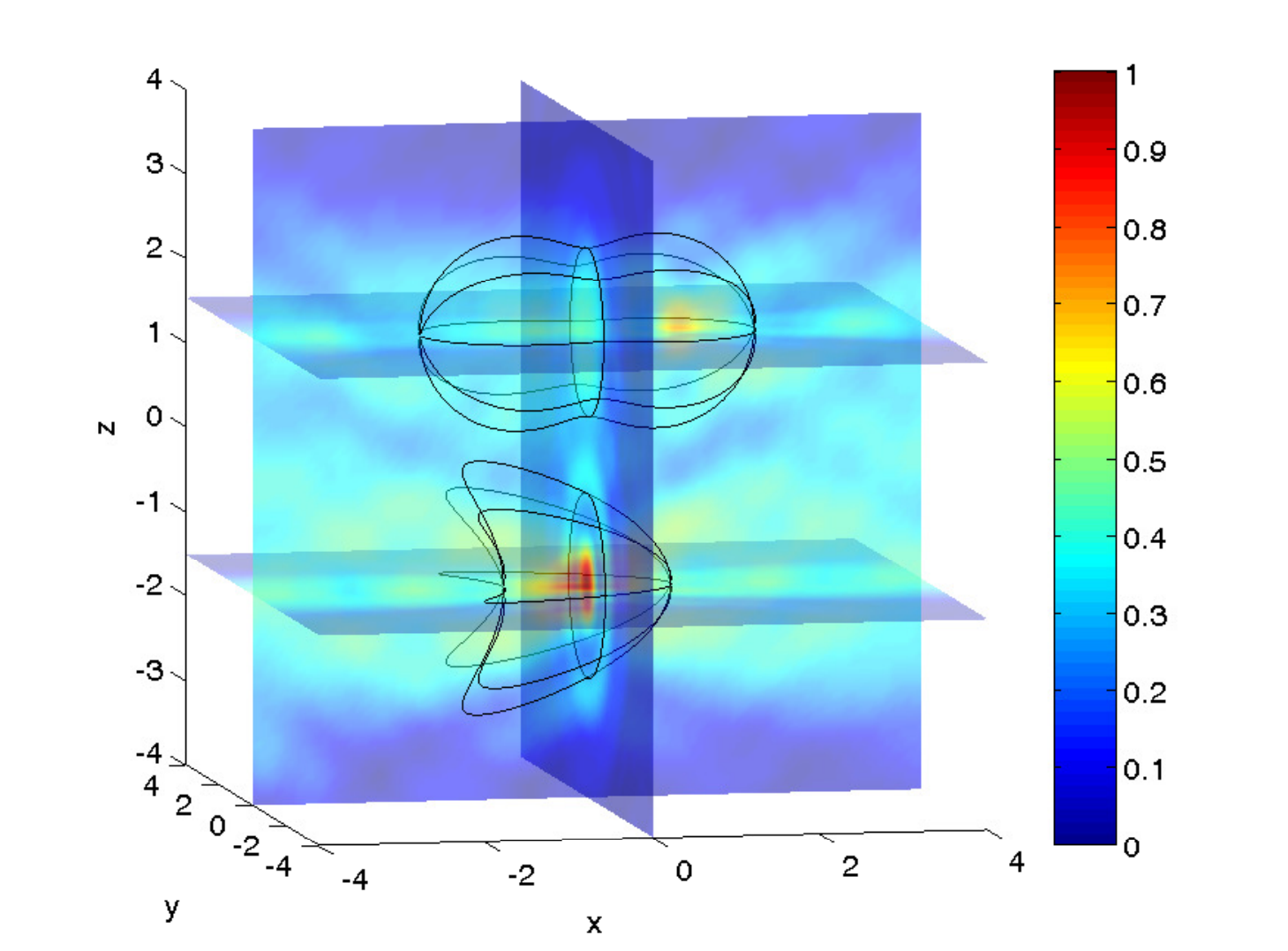}
\hfill{}\includegraphics[width=0.4\textwidth]{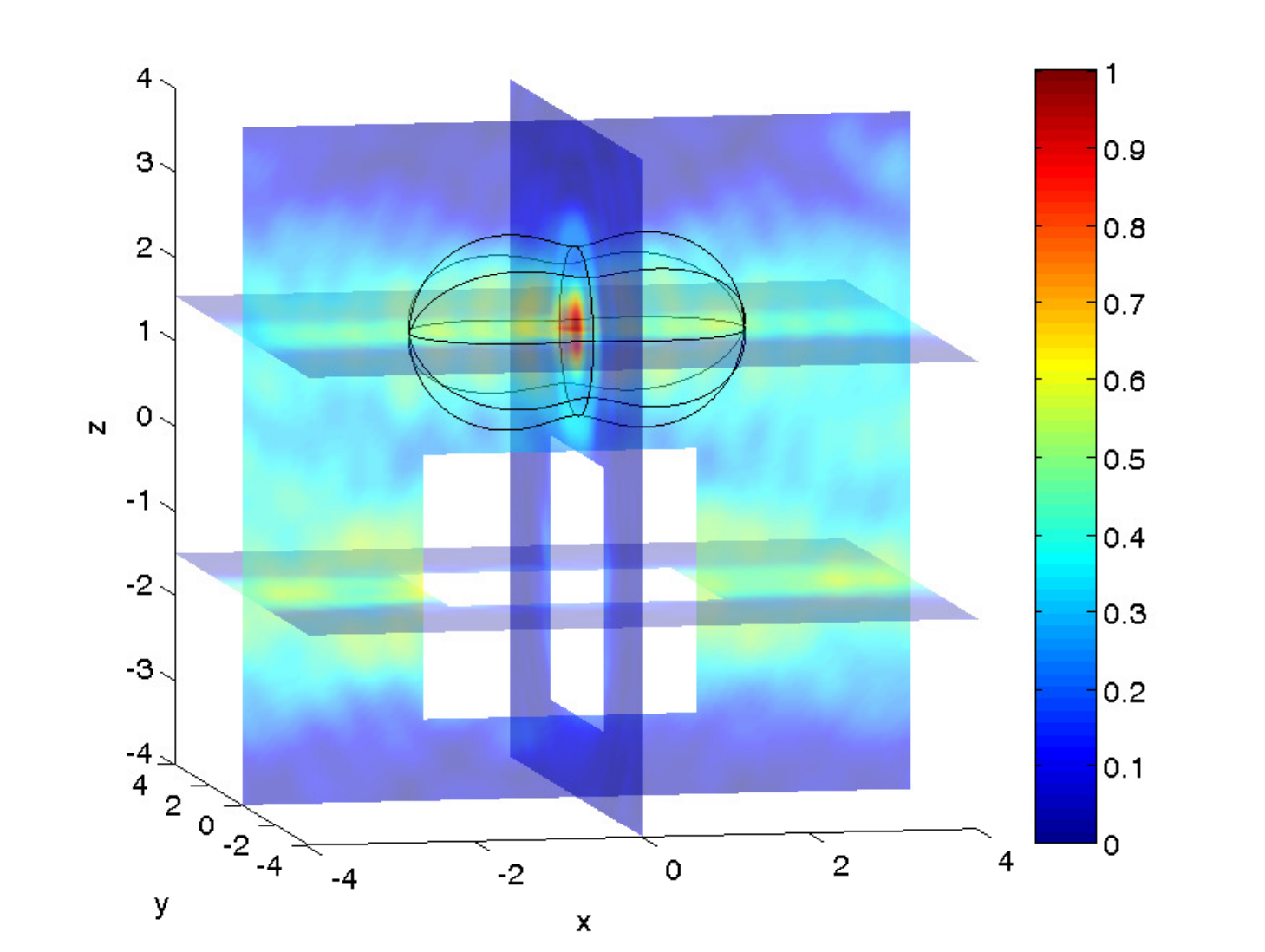}\hfill{}

\hfill{}\includegraphics[width=0.4\textwidth]{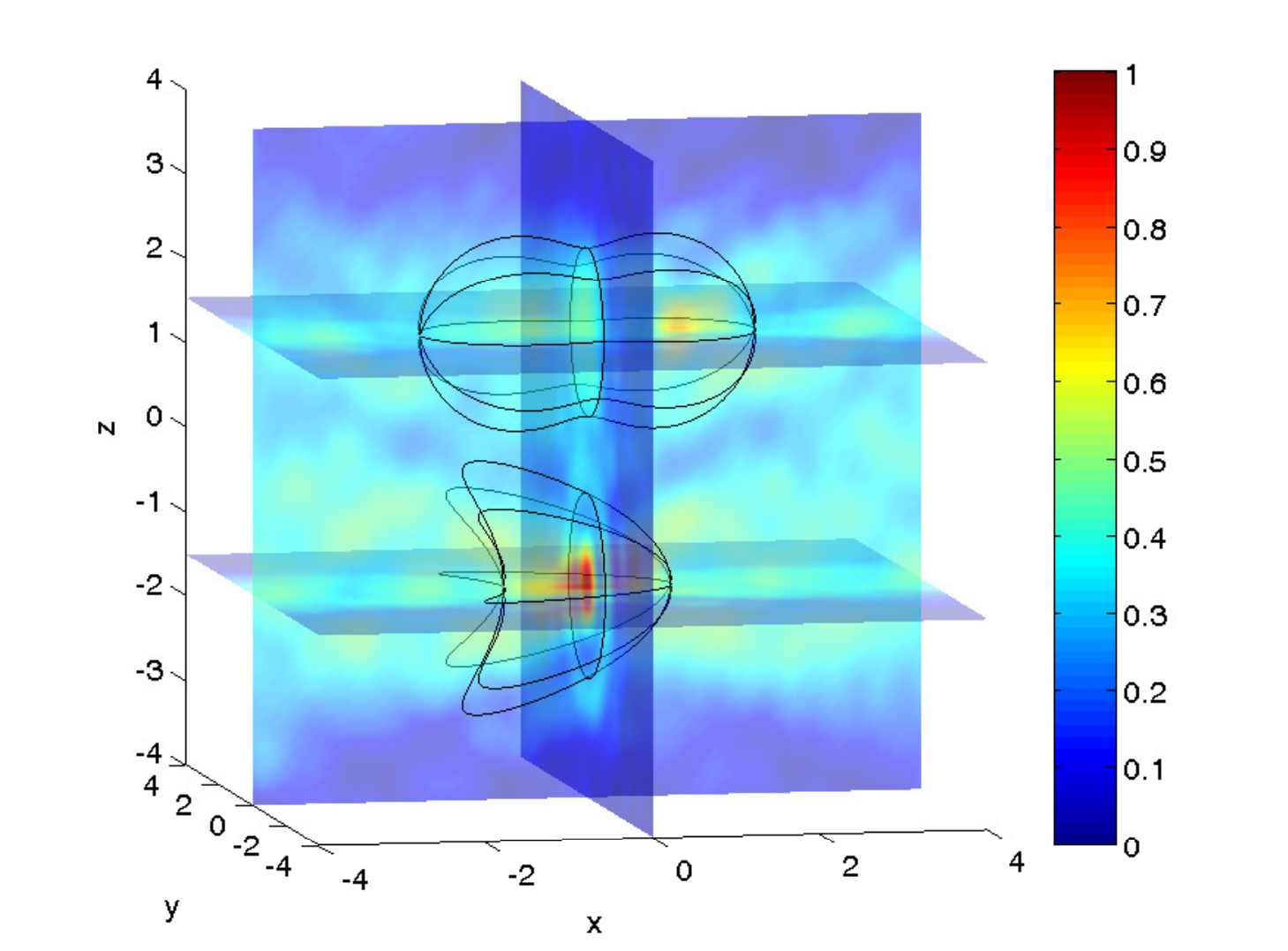}
\hfill{}\includegraphics[width=0.4\textwidth]{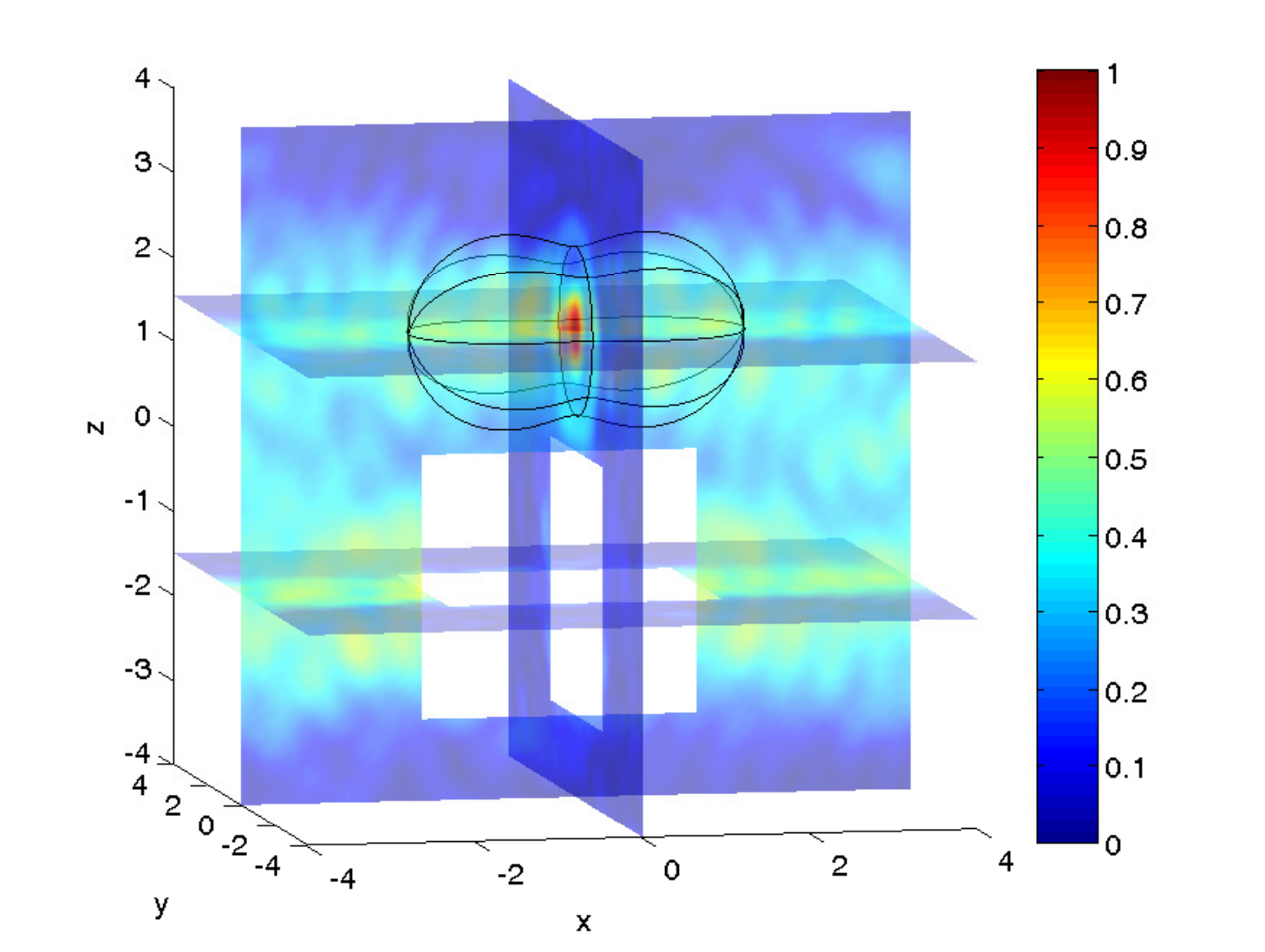}\hfill{}

\caption{\label{fig:ex5} Example 5: Reconstruction results using
(top) exact far-field data, (bottom) noisy far-field data with
$\delta=5\%$.}
\end{figure}

\medskip

\noindent\textbf{Example 6}. ~In this example, we adopt the same setting as that in Example 5, but with all the scatterer components set to be inhomogeneous media, with the  EM parameters $\varepsilon=4$, $\mu=1$ and $\sigma=0$. It can be verified that the far-field data for the three reference scatterers are distinct from each other, hence the generic condition \eqref{eq:generic assumption} is satisfied and thus the {\bf SSM(r)} applies; see Remark~\ref{rem:extension S}.
Different from the obstacle case, the order of the norms of the far-field data of medium components in the admissible reference space in Example 6 is as follows, the peanut comes first, and then the kite and finally is the unit ball. The numerical results are given in Fig.~\ref{fig:ex6}, from which we can see that the {\bf SSM(r)} can also successfully determine the locations of each medium  component successively.

%In this
%example, we consider a peanut and a kite given in \cite{LLZ09}, which
%are revolved objects along the $x$-axis. The gravitational center of
%the peanut is chosen at $(0,\,0,\,1.5\lambda)$, and the center of
%the kite is anchored at $(0,\,0,\,-1.5\lambda)$. The relative
%permittivity $\varepsilon_r$ of both scatterers is specified to be
%$4$.
%
%We collect an admissible set of far-field data of a unit ball, a kite
%and a peanut, all of which are located at the origin. Following the
%\textbf{SSM(r)} algorithm, their norms of far-field data are sorted
%in the descending order with the first one being the peanut,  the second one being the kite and the final one being the
%ball. Using those a priori admissible known data, we implement the
%\textbf{SSM(r)}  method.
%
%In both noise-free and noisy case, The \textbf{SSM(r)} can
%successfully determine the location of the peanut; see the
%dark red part in the center of the peanut. Once the peanut is
%trimmed from the sampling domain. We find the indicator plot shows
%that the center of the kite can be identified afterward. Since the
%ball does not exist, no peak values are left with the far-field data
%of the ball.  The \textbf{SSM(r)} also works for this difficult
%example with mild noise, say $\epsilon=5\%$; see
%Figure~\ref{fig:ex5}. Too large noise will make the method lose
%robustness from our numerical experience.

\begin{figure}
\hfill{}\includegraphics[width=0.4\textwidth]{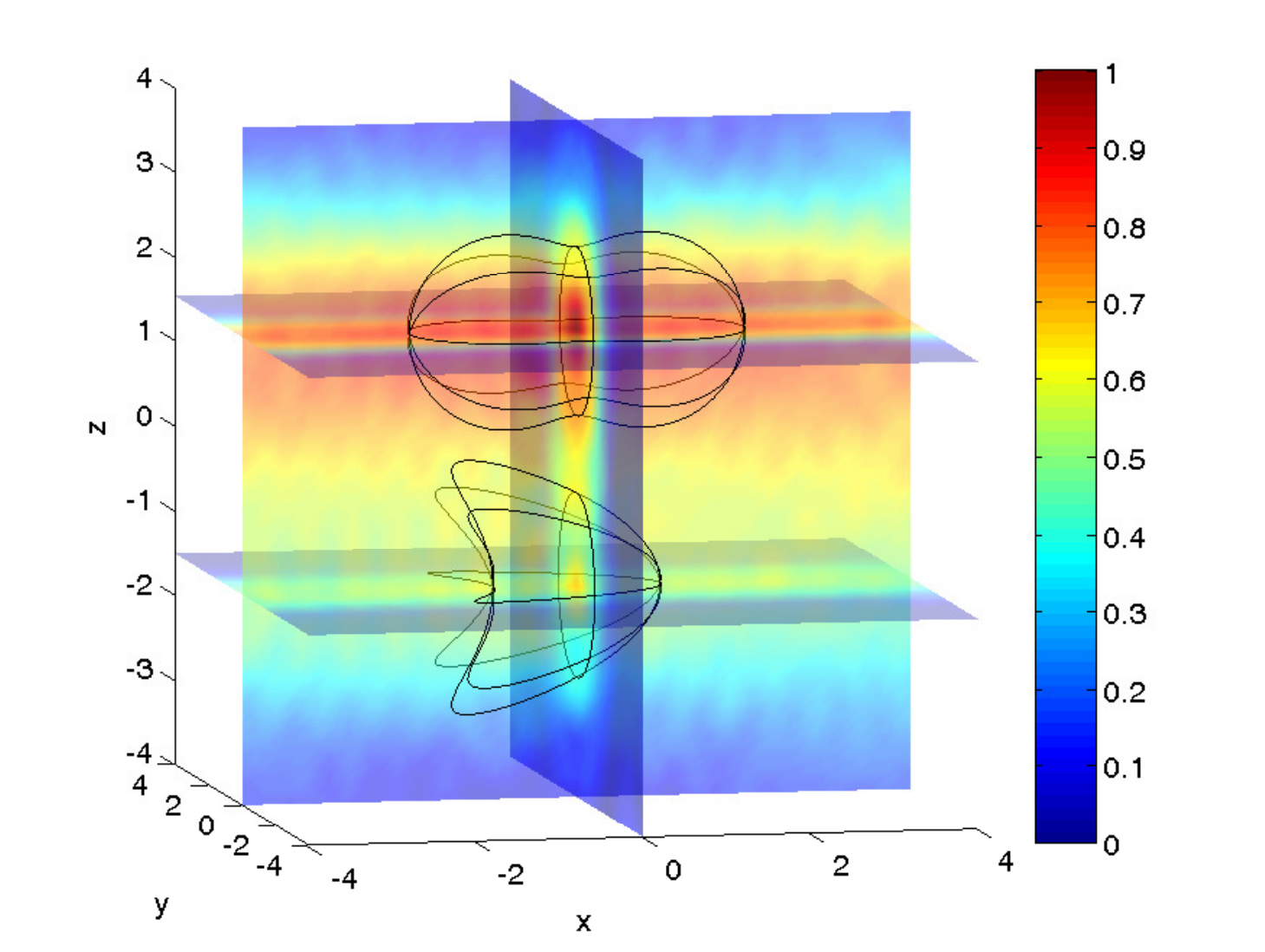}
\hfill{}\includegraphics[width=0.4\textwidth]{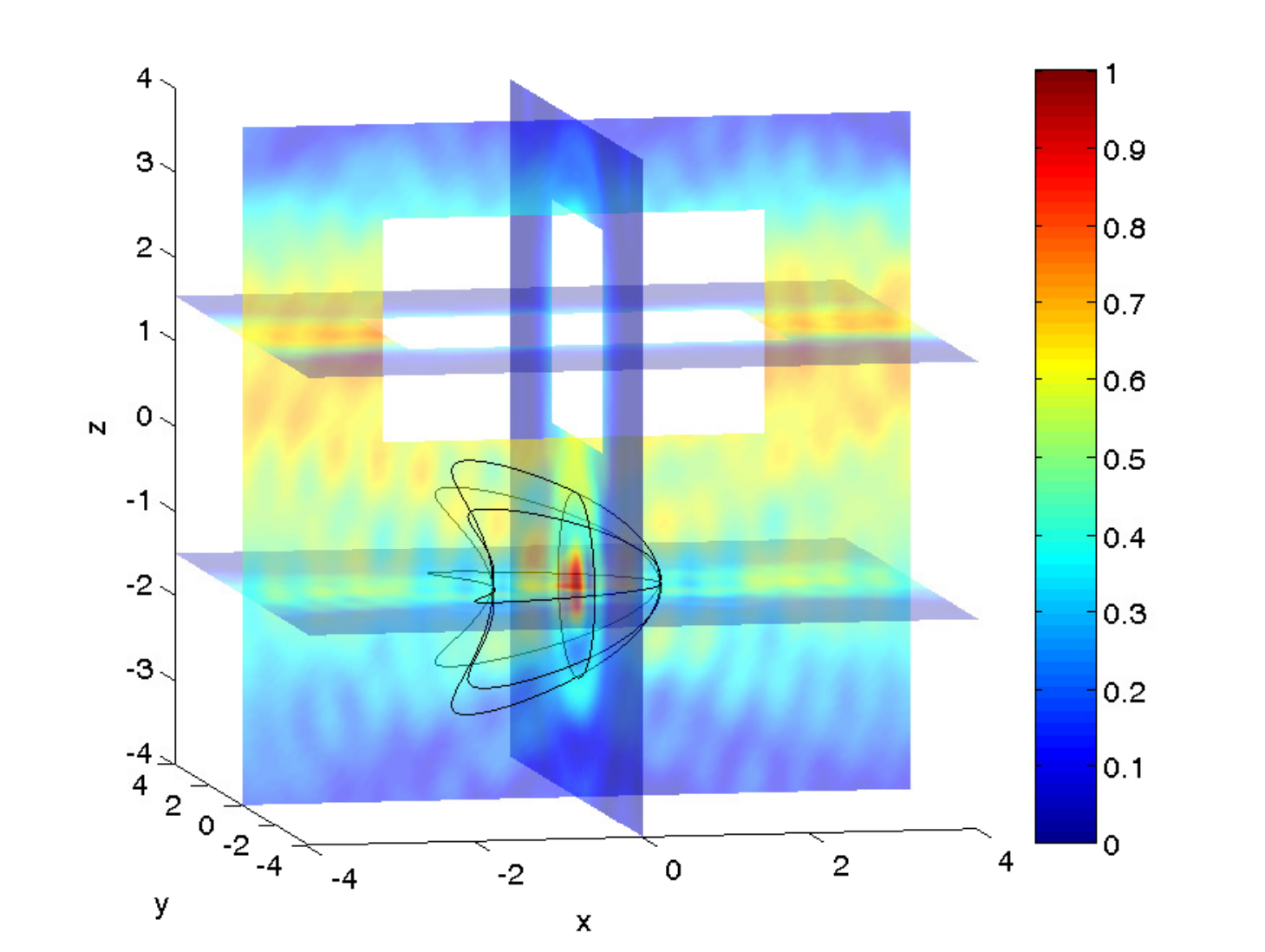}\hfill{}

\hfill{}\includegraphics[width=0.4\textwidth]{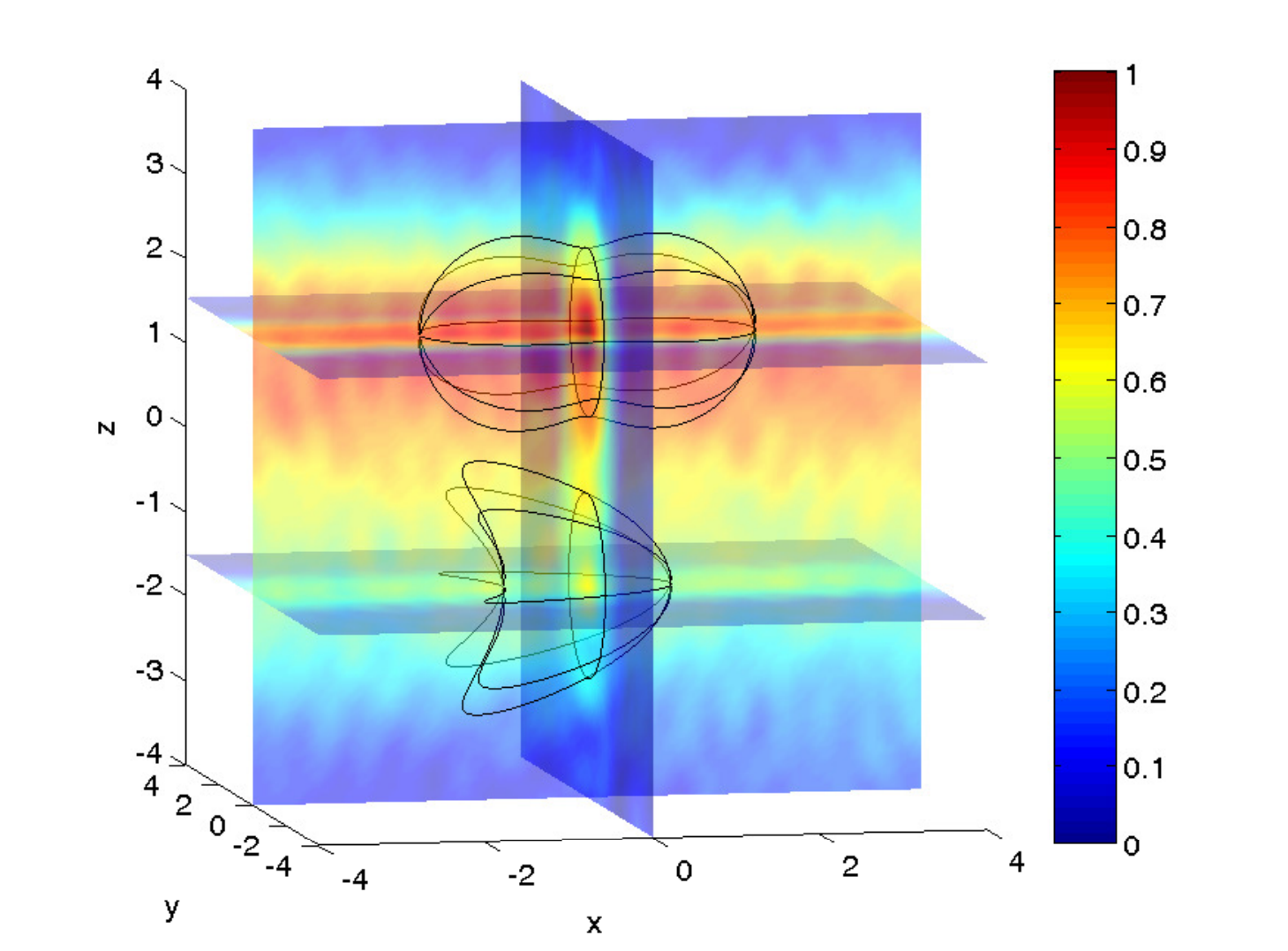}
\hfill{}\includegraphics[width=0.4\textwidth]{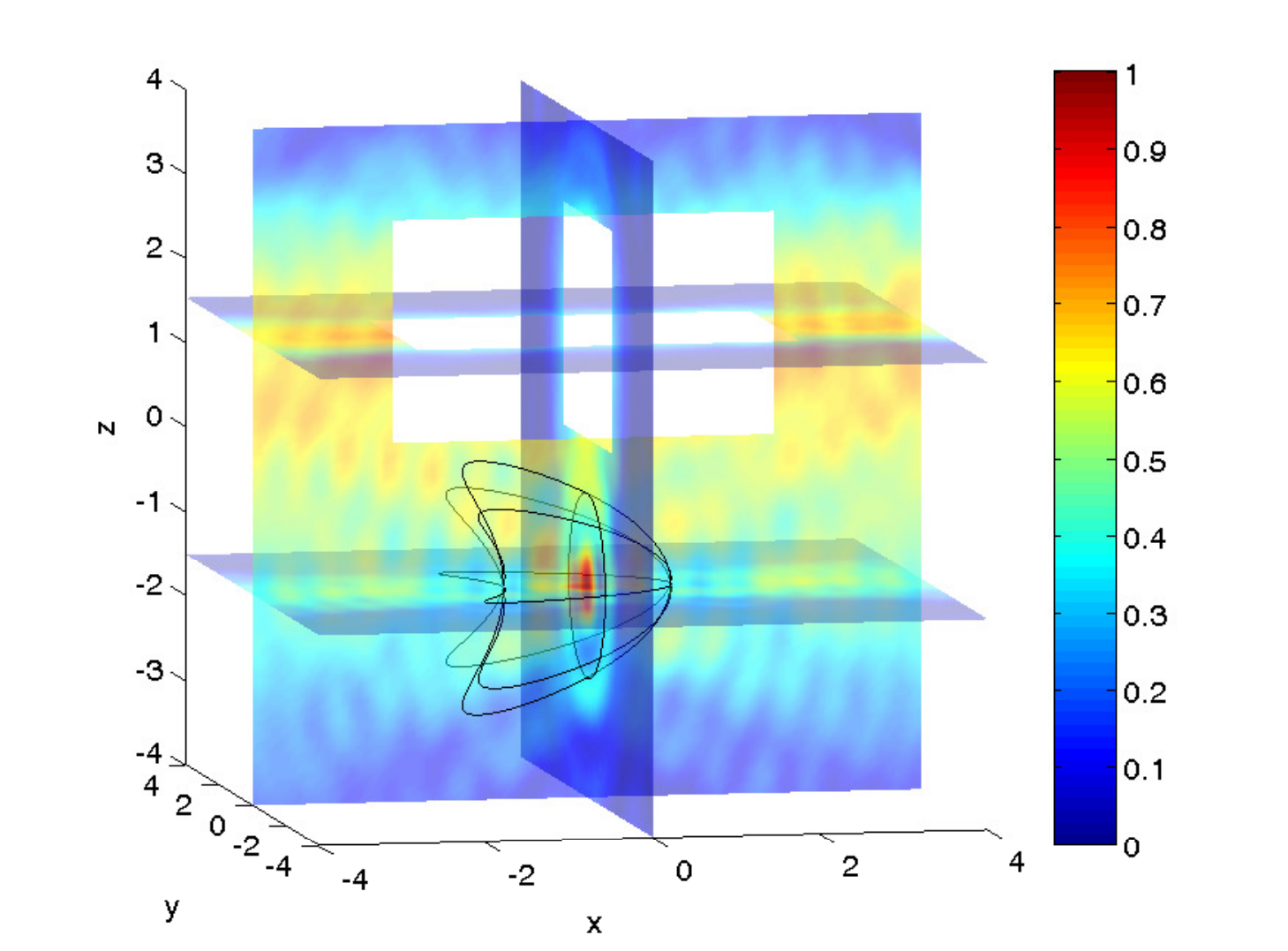}\hfill{}

\caption{\label{fig:ex6} Example 6: Reconstruction results using
(top) exact far-field data, (bottom) noisy far-field data with
$\delta=5\%$.}
\end{figure}

\smallskip{}

%
%\noindent\textbf{Example 6 (Regular-sized obstacle scatterers)}. We adopt the same
%setting as in Example 5, but we would set both components to be PEC obstacles. It is pointed out that for this case, the far-field data
%of the kite component is ranked first and thus used for the first trial.
%%
%%
%%We collect a candidate set of far-field data of a unit ball, a kite
%%and a peanut, all of which are located at the origin. Following the
%%\textbf{SSM(r)} algorithm, their norms of far-field data are sorted
%%in the descending order with first peanut, then kite and finally
%%ball. Using those a priori know data, we implement the
%%\textbf{SSM(r)}  method.
%
%In both noise-free and noisy case, The \textbf{SSM(r)} can
%successfully determine the location of peanut (please refer to the
%dark red part in the center of the peanut). Once the peanut is
%trimmed from the sampling domain. We find the indicator plot shows
%that the center of the kite can be identified afterward. Since the
%ball does not exist, no peak values are left with the far-field data
%of the ball.  The \textbf{SSM(r)} also works for this difficult
%example with mild noise, say $\epsilon=5\%$; see
%Figure~\ref{fig:ex6}. Too large noise will make the method lose
%robustness from our experience.

\smallskip{}

\subsection{SSM(s) for line-segment-like scatterers}

In this section, we present some numerical examples to show some very interesting and promising features of the {\bf SSM(s) } that were not covered in our theoretical analysis. Specifically, we shall test the performance of the {\bf SSM(s)} in identifying `partially small' line-segment-like scatterers.

\medskip{}

\noindent\textbf{Example 7.}~This
example considers a slender cylinder scatterer with base point
anchored at $(-0.5\lambda,\,0,\,0)$, radius $0.1\lambda$, height
$2\lambda$ and pointing to the positive $x$-axis. The EM parameters inside the slender cylinder are chosen to be $\varepsilon=4, \mu=1$ and $\sigma=0$. The numerical results are given in Fig.~\ref{fig:ex7}. In the noise-free case, the \textbf{SSM(s)} can 
determine successfully the location and even the length for the slender cylinder
scatterer. The identified geometry is roughly a red long bar
with the correct length. Furthermore, the
\textbf{SSM(s)} also performs well for the even more challenging case with large random noise, up to $\delta=20\%$, attached to the far-field data.

\begin{figure}
\hfill{}\includegraphics[width=0.4\textwidth]{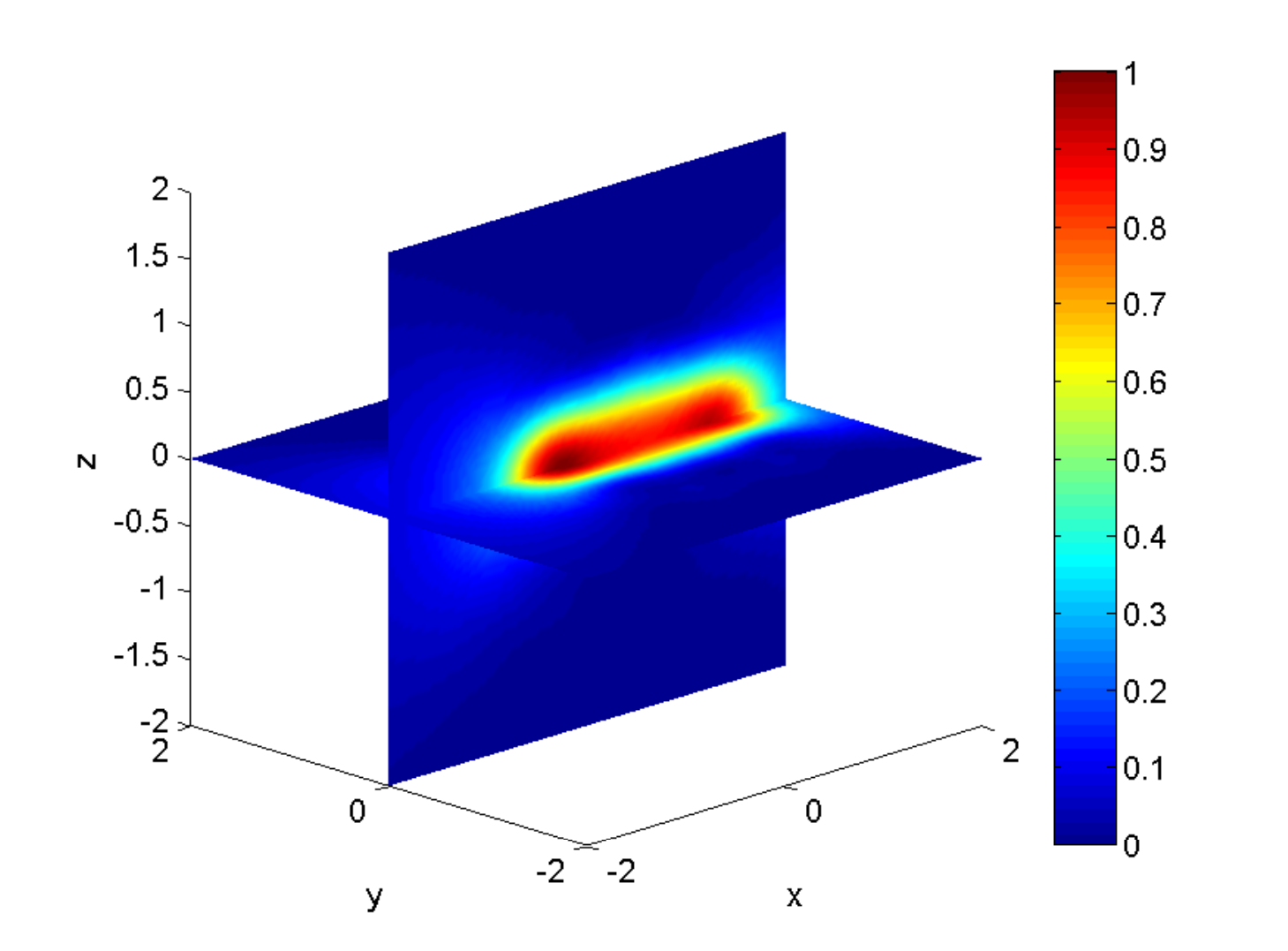}
\hfill{}\includegraphics[width=0.4\textwidth]{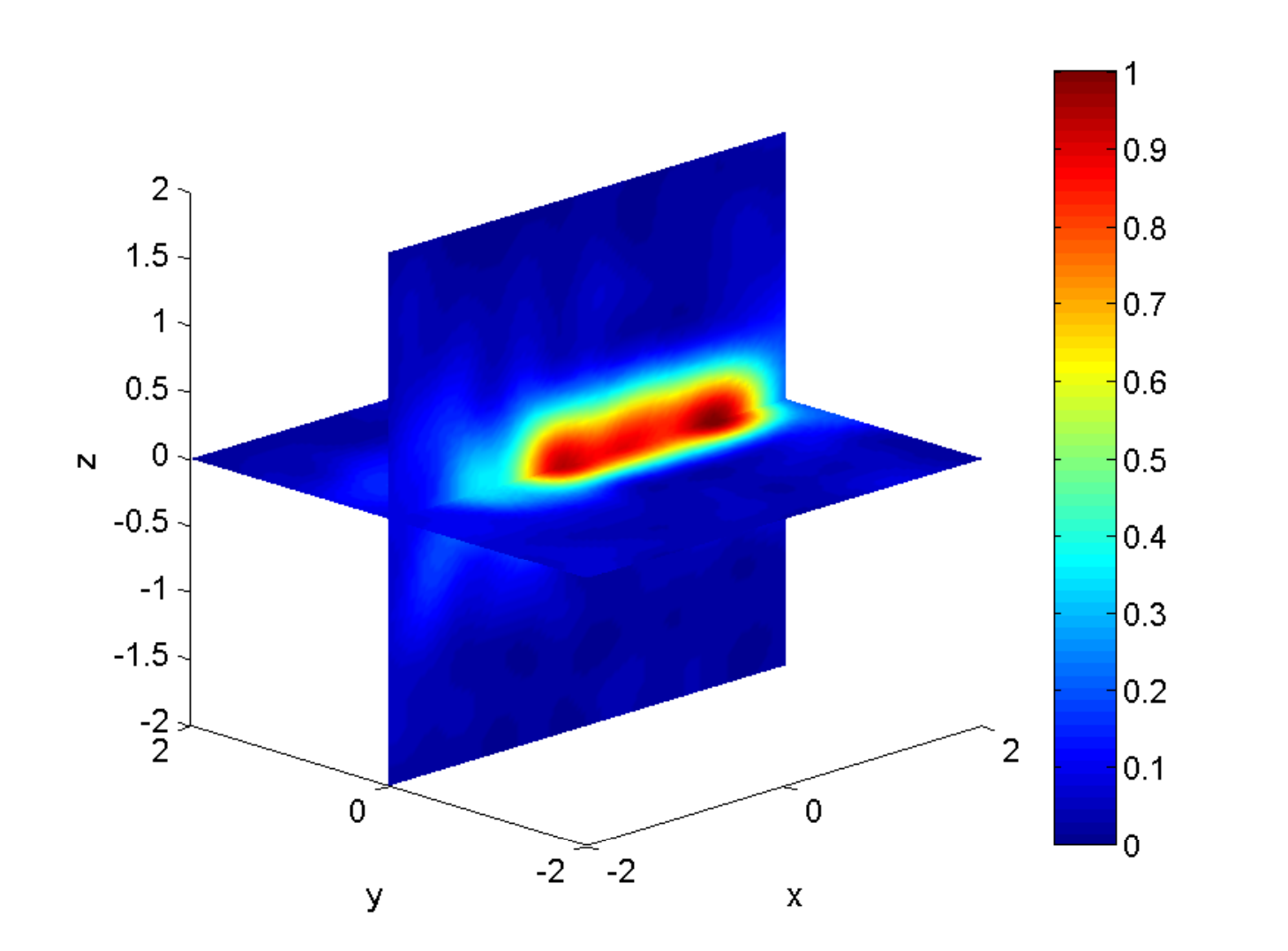}\hfill{}

\caption{\label{fig:ex7} Example 7: Reconstruction results using
(left) exact far-field data, (right) noisy far-field data with
$\delta=20\%$.}
\end{figure}

\medskip

\noindent\textbf{Example 8.} In this example, we consider an L-shaped scatterer composed of two slender cylinders as
given in Example 7, except that both base points are moved to
$(-\lambda,\,-\lambda,\,0)$ and pointing to the positive $x$- and
$y$-axes, respectively. The EM parameters inside the scatterer are chosen to be $\varepsilon=4, \mu=1$ and $\sigma=0$. For this example, it turns out that we need multiple probing wave measurements in order to have a fine reconstruction. Specifically, we shall take three far-field measurements corresponding to  $(p_1,\theta_1')=(e_3, e_1)$, $(p_2,\theta_2')=(e_1, e_2)$ and $(p_3,\theta_3')=(e_2, e_3)$. The indicator function that we shall use for the identification is given by taking the maximum value of the three separate indicator functions corresponding to the three far-field measurements. That is
\[
I_s(z)={\max}_{1\leq l\leq 3}\ I_s^{(l)}(z),\quad z\in\mathcal{T},
\]
where $I^{(l)}_s(z)$ is the indicator function calculated by using the far-field data generated by the plane wave with $(p_l, \theta_l')$, $l=1,2,3$.
It can be seen from Fig.~\ref{fig:ex8} that an L-shaped dark red bar is identified from the composite indicator function. The reconstruction result of the {\bf SSM(s)} is even robust to noise up to $20\%$ as well.

\begin{figure}
\hfill{}\includegraphics[width=0.4\textwidth]{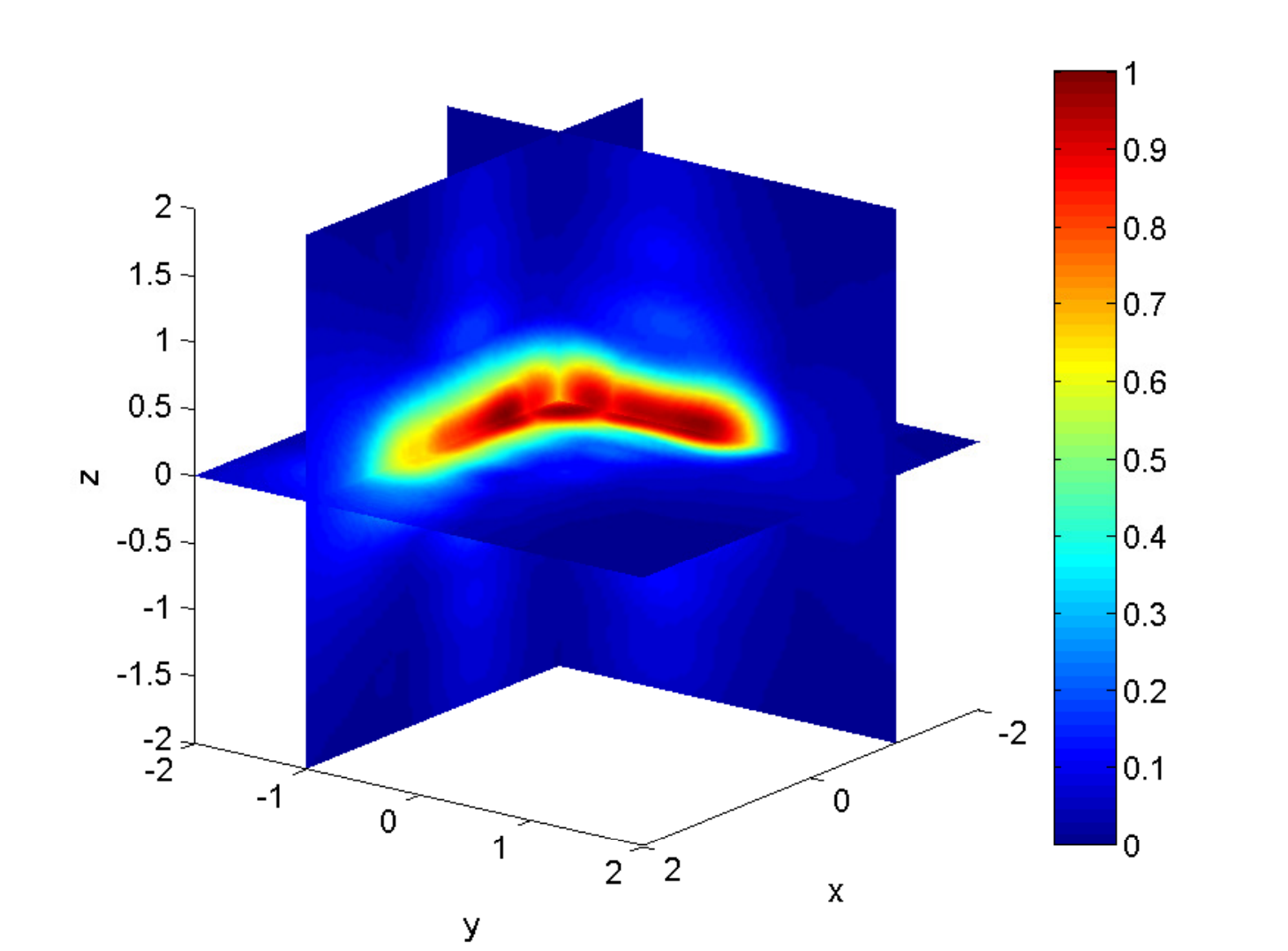}
\hfill{}\includegraphics[width=0.4\textwidth]{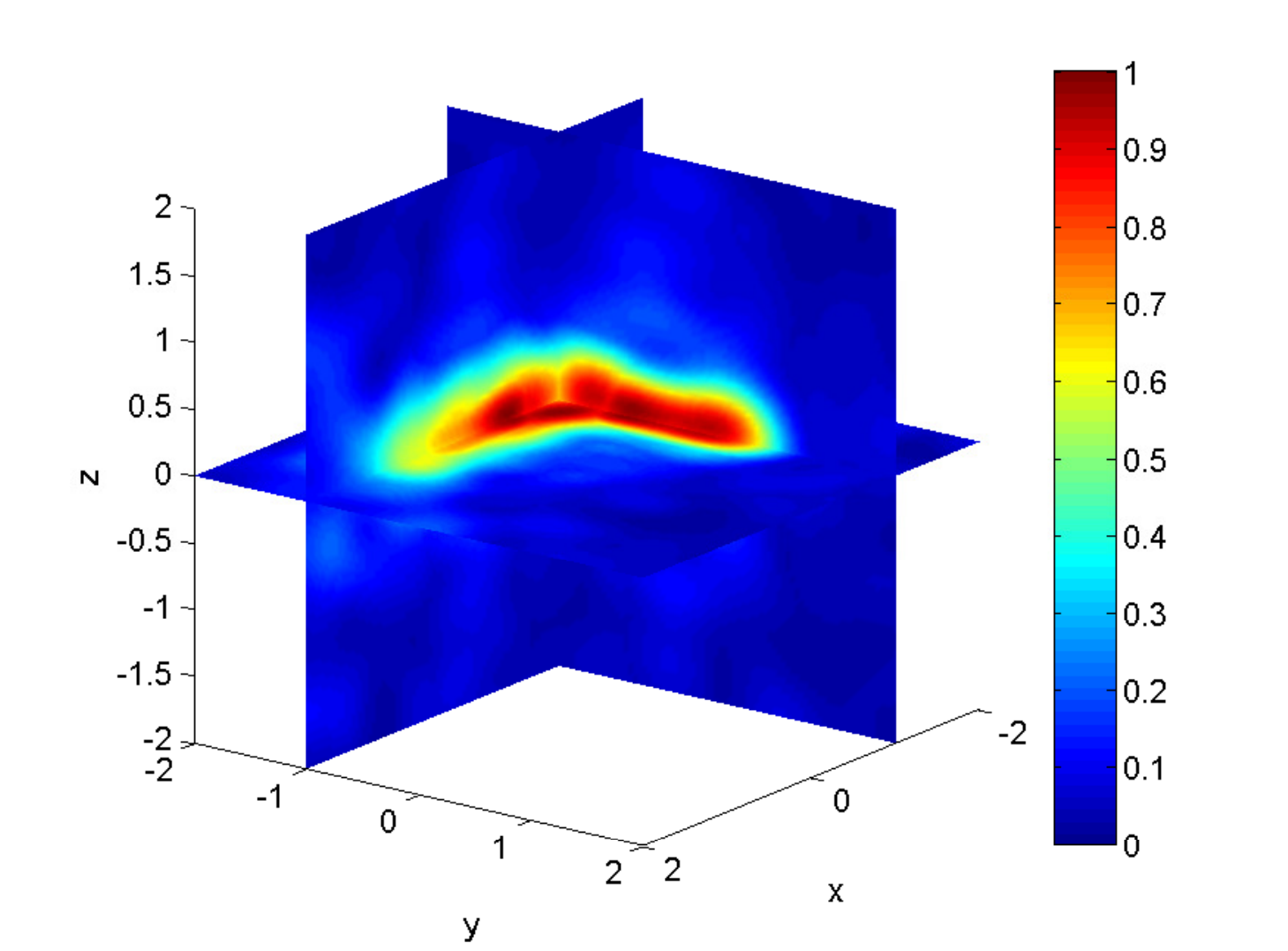}\hfill{}

\caption{\label{fig:ex8} Example 8:  Reconstruction results with
three excitation fields using (left) exact far-field data, (right)
noisy far-field data with $\delta=20\%$.}
\end{figure}

\section{Concluding remarks\label{sec:Conclusion}}

In this paper, two inverse scattering schemes, {\bf SSM(s)} and {\bf SSM(r)}, are proposed for locating multiple electromagnetic scatterers by a single electric far-field measurement. The locating schemes could work in an extremely general setting: the underlying scatterer might include, at the same time, obstacle components and inhomogeneous medium components; the number of the scatterer components and the physical property of each component are not required to be known in advance. The first scheme {\bf SSM(s)} is for locating scatterers of small size compared to the detecting EM wavelength. For this scheme, the content of each medium component is not required to known in advance either. The second scheme {\bf SSM(r)} is for locating scatterers of regular size compared to the detecting EM wavelength. For this scheme, certain {\it a priori} information would be required of each scatterer component. Specifically, if the component is an obstacle, then its shape must be from a certain known admissible reference scatterer class; and if the component is an inhomogeneous medium, then its support and content must also be from a certain known admissible reference scatterer class, and moreover, a certain generic condition must be satisfied. Nevertheless, the scheme {\bf SSM(r)} could also work in a very general setting. The reference class may consist of multiple different reference scatterers, and some reference scatterer may not be presented as a component in the unknown scatterer, and some may be presented as components for more than one time. The setting considered would be of significant interests, e.g., in radar and sonar imaging. The locating schemes are based on some novel indicator functions, whose indicating behaviors could be used for identification in many applications. In calculating the indicator functions, no inversions would be involved, so the proposed methods are very efficient and robust to noise.
Rigorous mathematical justifications are provided for both schemes. Extensive numerical experiments are conducted to illustrate the effectiveness and robustness of the proposed imaging schemes in various practical scenarios. The numerical results match our theoretical predications in a very sound manner. Furthermore, the numerical results also reveal some very interesting and promising features of the proposed schemes. The scheme {\bf SSM(s)} is also capable of qualitatively imaging the supports/shapes of the unknown scatterers in addition to locating them. Second, it is also capable of qualitatively identifying `partially small' line-segment-like scatterers. Both of these topics are worth further investigation.

\section*{Acknowledgement}

The work of Jingzhi Li is supported by the NSF of China (No. 11201453 and 91130022). The work of Hongyu Liu is supported by NSF grant, DMS 1207784. The work of Zaijiu Shang is supported by NSF of China (No.10990012). The work of Hongpeng Sun is supported the SFB Research Center "Mathematical Optimization and Application in Biomedical Sciences".

\end{document}